\documentclass{amsart}
\usepackage{latexsym,amsmath,amssymb}
\usepackage{amsthm}
\usepackage{pslatex}
\usepackage{cite}
\usepackage{color}
\bibliographystyle{plain}

\newtheorem{theorem}{Theorem}[section]
\theoremstyle{plain}
\newtheorem{corollary}[theorem]{Corollary}
\newtheorem{example}[theorem]{Example}
\newtheorem{lemma}[theorem]{Lemma}
\newtheorem{proposition}[theorem]{Proposition}
\numberwithin{equation}{section}

\def\nn{\nonumber}

\def\iimp{\int\limits_0^{\infty}}

\def\Dqa{D^{\,q,\alpha}}
\def\Dq{D^{\,q}}
\def\eqa{e_{q,\alpha}}
\def\Eqa{E_{q,\alpha}}
\def\dqa{d_{q,\alpha}}
\def\Iqa{I^{\,q,\alpha}}
\def\Nqa{N_{q,\alpha}}
\def\Gqa{\Gamma_{q,\alpha}}
\def\Bqa{B_{q,\alpha}}
\def\cqa{c_{q,\alpha}}
\def\sqa{s_{q,\alpha}}
\def\fa1{f_\alpha}
\def\Rqa{R_{q,\alpha}}

\begin{document}

\title{$q$-deformed conformable fractional Natural transform}
\author[O.~Herscovici]{Orli Herscovici}
\address{O.~Herscovici\\Department of Mathematics,
University of Haifa,
3498838  Haifa, Israel}
\email{orli.herscovici@gmail.com}

\author[T.~Mansour]{Toufik Mansour}
\address{T.~Mansour\\Department of Mathematics,
University of Haifa,
3498838  Haifa, Israel}
\email{tmansour@univ.haifa.ac.il}

\begin{abstract}
In this paper, we develop a new deformation and
generalization of the Natural integral transform based
on the conformable fractional $q$-derivative. We obtain
transformation of some deformed functions and apply
the transform for solving linear differential equation with
given initial conditions.
\end{abstract}
\maketitle

\noindent{\sc Keywords:} Laplace transform; Sumudu transform;
$q$-deformation; Jackson $q$-derivative; conformable
fractional $q$-derivative; $q$-Leibniz rule;

\noindent{\sc 2010 MSC:}
05A30; 33D05; 44A10; 44A20; 44A30;

\section{Introduction}
Differential equations appears in many problems of Physics,
Engineering, and other sciences. So we need powerful
mathematical tools to handle them. The integral transforms
are one of the widely used techniques applied for solution
of differential equations. One of disadvantages attributed to
Mathematics was the perfect theoretical models describing
unperfect realistic conditions. The deformed  cases of well
known problems may be very useful to overcome this
disadvantage. By varying those deformed parameters, one can
obtain solutions that will be possible from industrial point
of view.

In this paper we develop a new deformation of the Natural integral transform. We define new extensions of some special functions and apply our deformed transform to them. As well we obtain different recurrence relations connecting between transform of a function and its derivatives. Among other tools, a new extension of $q,\alpha$-Taylor series is proposed.
We start here with recalling three integral transforms,
namely the Laplace, the Sumudu, and the Natural transform.
Further we develop a deformation of the Natural transform,
that still has not been developed, and show its application
to some deformed differential equations.

The Laplace transform is one of the
most famous from the integral transforms. It is defined as
\begin{align*}
F(s)=\iimp f(t)e^{-st}dt.
\end{align*}
This transform is very useful in solving differential
equations with given initial and boundary conditions. One of
its important features is the transformation from the time
domain to the frequency domain.

In 1993, Watugala \cite{Watugala1993} proposed a new integral
transform, named Sumudu transform, which is defined as
\begin{align*}
S\{f(t)\}=F_s(u)=\iimp\frac{1}{u}e^{-\frac{t}{u}}f(t)dt.
\end{align*}
Among its properties Watugala remarks an easier visualization.
Belgacem \cite{Belgacem2006} denotes the Sumudu
transform as an ideal tool for solving many engineering
problem. The Sumudu transform, unlike the Laplace transform,
is not focusing on transformation into the frequency domain.
One of its main features is preserving units and scale during
transformation \cite{Belgacem2006}.

In 2008, Khan and Khan \cite{Khan2008} defined a new
transform, called $N$-transform and renamed later to Natural
transform, as following
\begin{align*}
R(u,s)=N(f(t))=\iimp f(ut)e^{-st}dt.
\end{align*}
It is easy to see that in the case $u=1$ we obtain the Laplace
transform, and in the case $s=1$ we obtain the Sumudu
transform. The Natural transform was studied and applied to
Maxwell's equations in the series of works of Belgacem and
Silambarasan \cite{Belgacem2012,Belgacem2012a,
Belgacem2012b}.

For both Sumudu and Laplace transforms their q-analogues were
obtained and studied. The $q$-analogues of the Sumudu
transform based on the Jackson's $q$-derivative and
$q$-integral were studied in \cite{Albayrak2013,Albayrak2013a,
Ucar2014}. The $q$-analogues of the Laplace transform based
some on the Jackson's  and some on the Tsallis's
$q$-derivative and $q$-integral were studied in
\cite{Hahn1949,Bohner2010,Chung2014,Lenzi1999,Plastino2013,
Purohit2007,Ucar2012,Yadav2009}.
But the deformations of the Natural transform were not defined
and not studied. Due to its dual nature and close relationship
with both, Laplace and Sumudu, transforms, the Natural
transform is more flexible and lets easily to choose during
the problem solution, what way is preferable in each concrete
case.

In this paper, we define and study a deformation of the
Natural transform based on the conformable fractional
$q$-derivative defined by Chung \cite{Chung2016}. This
deformation is actually a generalization of the
$q$-deformation based on the Jackson $q$-derivative, and,
thus, our deformation generalizes the $q$-deformed Laplace
transform based on the Jackson $q$-derivative
\cite{Chung2014} and proposes
another definition for the $q$-Sumudu transform different
from \cite{Albayrak2013a}.
We obtain some applications of this deformation of the
Natural transform.

\section{Definitions and some properties of the conformable
$q$-derivative}
In \cite{Chung2016}, Chung defined a conformable fractional
$q$-derivative as
\begin{align}
\Dqa_xf(x)=
\frac{[\alpha](f(x)-f(qx)}{x^\alpha(1-q^\alpha)}
=x^{1-\alpha}D^{\,q}_xf(x),\label{Definition-Dqa}
\end{align}
where $[\alpha]=\dfrac{1-q^\alpha}{1-q}$ is the $q$-number
of $\alpha$, and $D^{\,q}_x$ is the Jackson $q$-derivative
with respect to the variable $x$. This operator is a linear
operator \cite{Chung2016}. It is easy to see that in case
$\alpha=1$, this differential operator coincides with Jackson
$q$-derivative. The following notation is widely used in
$q$-calculus $(a+b)^n_q=\prod_{j=0}^{n-1}(a+q^jb)$.
Accordingly with this definition we have
\begin{align}
(a+b)^n_{q^\alpha}=\left\{\begin{array}{ll}
\prod_{j=0}^{n-1}(a+q^{\alpha j}b),\quad &\text{ for integer }
n>0,\\
1,\quad &\text{ }n=0.\end{array}\right.
\label{npos}
\end{align}

The conformable fractional $q$-integral is an inverse
operation of the conformable fractional $q$-derivative
\begin{align}
I^{\,q,\alpha}_xf(x)=\frac{1}{[\alpha]}
(1-q^\alpha)x^\alpha\sum_{j\geq 0}q^{\alpha j}f(q^jx)
=I^{\,q}_x(x^{\alpha-1}f(x)),\nn
\end{align}
where $I^{\,q}_x$ is the Jackson $q$-integral.
Then for $\alpha$-monomial $x^{\alpha n}$ we have
\begin{align}
\Dqa_x x^{\alpha n}=[n\alpha]x^{\alpha(n-1)},\hspace{1cm}
\Iqa_x x^{\alpha n}=\frac{x^{\alpha(n+1)}}{[(n+1)\alpha]}.
\label{dqa-monomial}
\end{align}
The Leibniz rule for the conformable fractional $q$-derivative
has the following form (see \cite{Chung2016})
\begin{align}
\Dqa_x(f(x)g(x))=f(qx)\Dqa_xg(x)+\left(\Dqa_xf(x)\right)g(x).
\label{qaLeibnizRule}
\end{align}

Therefore, by integrating both sides of \eqref{qaLeibnizRule},
we obtain a rule for integrating by parts
\begin{align}
\int\left(\Dqa_x f(x)\right)g(x)\dqa x=f(x)g(x)
-\int f(qx)\Dqa_xg(x)\dqa x. \label{qaIntegrationByParts}
\end{align}
Chung defined also a conformable fractional $q$-exponential
function
\begin{align*}
\eqa(x)=\sum_{j\geq 0}\frac{x^{\alpha j}}{[j\alpha]!}
=((1-q)x^\alpha;q^\alpha)_\infty,
\end{align*}
where $[n\alpha]!=[\alpha]\cdot[2\alpha]\cdots
[n\alpha]$, with the property
\begin{align}
\Dqa_x\eqa(ax)=a^\alpha\eqa(ax).\label{eqa-Derivative}
\end{align}
Note that, as usual, $[0]!=1$.

Two new deformations of trigonometric functions were proposed
in \cite{Chung2016}
\begin{align}
\eqa(i^{\frac{1}{\alpha}}x)=\cqa(x)+i\sqa(x),
\label{qa-e-sin-cos}
\end{align}
where
\begin{align*}
\cqa(x)=\sum_{n\geq 0}\frac{(-1)^n}{[2n\alpha]!}x^{2\alpha n},
\quad\sqa(x)=\sum_{n\geq 0}\frac{(-1)^n}{[(2n+1)\alpha]!}
x^{\alpha(2n+1)}.
\end{align*}
From \eqref{qa-e-sin-cos} one can obtain
\begin{align}
\cqa(x)&=\frac{1}{2}\left(\eqa(i^\frac{1}{\alpha}x)+
\eqa((-i)^\frac{1}{\alpha}x)\right)\label{cqa-exp},\\
\sqa(x)&=\frac{1}{2i}\left(\eqa(i^\frac{1}{\alpha}x)-
\eqa((-i)^\frac{1}{\alpha}x)\right).\label{sqa-exp}
\end{align}
By applying \eqref{eqa-Derivative},
it is easy to show that	
\begin{align*}
\Dqa_x\cqa(x)=-\sqa(x),\qquad \Dqa_x\sqa(x)=\cqa(x).
\end{align*}
By using the definition of the deformed conformable fractional
derivative \eqref{Definition-Dqa}, we evaluate conformable
derivative of a function $\dfrac{1}{\eqa(ax)}$.
\begin{align*}
\Dqa_x\frac{1}{\eqa(ax)}&=x^{1-\alpha}\Dq_x\frac{1}{\eqa(ax)}
=x^{1-\alpha}\frac{\frac{1}{\eqa(ax)}-\frac{1}{\eqa(qax)}}
{x-qx}\nn\\
&=x^{1-\alpha}
\frac{\eqa(qax)-\eqa(ax)}{\eqa(qax)\eqa(ax)(x-qx)}
=-\frac{\Dqa_x\eqa(ax)}{\eqa(qax)\cdot\eqa(ax)},
\end{align*}
and, by applying \eqref{eqa-Derivative}, we obtain
\begin{align}
\Dqa_x\frac{1}{\eqa(ax)}
&=-\frac{a^\alpha\eqa(ax)}{\eqa(qax)\cdot\eqa(ax)}
=-\frac{a^\alpha}{\eqa(qax)}.\label{inverse-eqa-Derivative}
\end{align}

It is easy to see that $\Dqa_xC=0$, where $C$ is a constant
($C$ does not depend on $x$). Indeed, from
\eqref{Definition-Dqa}, we have
$\Dqa_xC=x^{1-\alpha}\Dq_xC=0$.

We can see that $\Dqa_xx^\alpha=[\alpha]$. We would like to
build a sequence of polynomials $P_0(x),P_1(x),\ldots,P_n(x)$
of degrees $0,\alpha, \ldots, n\alpha$ respectively, so that
\begin{align*}
&\Dqa_xP_n(x)=P_{n-1}(x),\\
&P_n(a^\frac{1}{\alpha})=0,
\end{align*}
with initial condition $P_0(x)=1$. Therefore, the polynomial
$P_1(x)$ has the following form $P_1(x)=(x^\alpha-a)/[\alpha]$
. Obviously, $P_1(a^\frac{1}{\alpha})=((a^\frac{1}{\alpha})^
\alpha-a)/[\alpha]=0$ and $\Dqa_xP_1(x)=1=
P_0(x)$.

\begin{proposition}\label{prop1}
For all natural $n$ the following relation holds
\begin{align*}
\Dqa_x (x^\alpha-a)^n_{q^\alpha}=[n\alpha]
(x^\alpha-a)^{n-1}_{q^\alpha}.
\end{align*}
\end{proposition}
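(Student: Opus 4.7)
The plan is to proceed by induction on $n$, exploiting the recursive factorization of $(x^\alpha-a)^n_{q^\alpha}$ together with the $q,\alpha$-Leibniz rule \eqref{qaLeibnizRule}.

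The base case $n=1$ follows at once: $(x^\alpha-a)^1_{q^\alpha}=x^\alpha-a$, so by linearity, \eqref{dqa-monomial}, and $\Dqa_x C=0$ for constants, one gets $\Dqa_x(x^\alpha-a)=[\alpha]=[\alpha](x^\alpha-a)^0_{q^\alpha}$.

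For the inductive step, I will use the factorization
\begin{align*}
(x^\alpha-a)^n_{q^\alpha}=(x^\alpha-a)\cdot(x^\alpha-q^\alpha a)^{n-1}_{q^\alpha},
\end{align*}
which is immediate from \eqref{npos} by pulling the factor at $j=0$ out and reindexing the remaining product. Applying \eqref{qaLeibnizRule} with $f(x)=x^\alpha-a$ and $g(x)=(x^\alpha-q^\alpha a)^{n-1}_{q^\alpha}$ yields
\begin{align*}
\Dqa_x(x^\alpha-a)^n_{q^\alpha}=(q^\alpha x^\alpha-a)\,\Dqa_x g(x)+[\alpha]\,g(x),
\end{align*}
since $f(qx)=q^\alpha x^\alpha-a$ and $\Dqa_x f=[\alpha]$. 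The induction hypothesis, applied to $g$ with $a$ replaced by $q^\alpha a$, gives $\Dqa_x g(x)=[(n-1)\alpha](x^\alpha-q^\alpha a)^{n-2}_{q^\alpha}$. Pulling the last factor out of $(x^\alpha-q^\alpha a)^{n-1}_{q^\alpha}$ as $(x^\alpha-q^{\alpha(n-1)}a)(x^\alpha-q^\alpha a)^{n-2}_{q^\alpha}$, the common factor $(x^\alpha-q^\alpha a)^{n-2}_{q^\alpha}$ can be collected, and by the analogous factorization $(x^\alpha-a)^{n-1}_{q^\alpha}=(x^\alpha-a)(x^\alpha-q^\alpha a)^{n-2}_{q^\alpha}$ the desired identity reduces to the scalar identity
\begin{align*}
(q^\alpha x^\alpha-a)[(n-1)\alpha]+[\alpha](x^\alpha-q^{\alpha(n-1)}a)=[n\alpha](x^\alpha-a).
\end{align*}

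The only real work is checking this last identity, which amounts to verifying two telescoping relations between $q$-numbers: $q^\alpha[(n-1)\alpha]+[\alpha]=[n\alpha]$ for the $x^\alpha$-coefficient and $[(n-1)\alpha]+q^{\alpha(n-1)}[\alpha]=[n\alpha]$ for the $a$-coefficient. Both follow directly from $[k]=(1-q^k)/(1-q)$ by clearing denominators and collecting powers of $q^\alpha$. This scalar verification is the only potentially error-prone step; everything else is bookkeeping with the product \eqref{npos}. As a sanity check, setting $\alpha=1$ recovers the classical Jackson identity $D^q_x(x-a)^n_q=[n]_q(x-a)^{n-1}_q$.
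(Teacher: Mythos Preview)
Your proof is correct and follows the same inductive strategy as the paper: factor the product, apply the Leibniz rule \eqref{qaLeibnizRule}, invoke the induction hypothesis, and finish with a $q$-number telescoping identity. The only difference is cosmetic: the paper peels off the \emph{last} factor, writing $(x^\alpha-a)^{k+1}_{q^\alpha}=(x^\alpha-a)^k_{q^\alpha}(x^\alpha-q^{\alpha k}a)$, so the induction hypothesis applies directly to $(x^\alpha-a)^k_{q^\alpha}$ without shifting $a$, and the common factor $(x^\alpha-a)^k_{q^\alpha}$ appears immediately, reducing the computation to the single identity $q^\alpha[k\alpha]+[\alpha]=[(k+1)\alpha]$. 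Your choice of peeling off the \emph{first} factor forces the shift $a\mapsto q^\alpha a$ in the hypothesis and an extra refactoring step, so you end up checking two telescoping relations rather than one---but both routes are equally valid.
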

\begin{proof}
We proceed the proof by induction on $n$.
It is easily to see that for $n=1$ we have
\begin{align*}
\Dqa_x(x^\alpha-a)^1_{q^\alpha}=\Dqa_x(x^\alpha-a)=[\alpha],
\end{align*}
and our statement holds.
Let us assume that our statement holds for some integer $k$.
We will prove it for $k+1$.
By \eqref{npos}, we have
$(x^\alpha-a)^{k+1}_{q^\alpha}=(x^\alpha-a)^k_{q^\alpha}
(x^\alpha-q^{\alpha k}a)$. By applying \eqref{qaLeibnizRule},
we obtain
\begin{align*}
\Dqa_x(x^\alpha-a)^{k+1}_{q^\alpha}&=
\Dqa_x\left((x^\alpha-a)^k_{q^\alpha}
(x^\alpha-q^{\alpha k}a)\right)\nn\\
&=(q^\alpha x^\alpha-q^{\alpha k}a)\cdot[k\alpha]
(x^\alpha-a)^{k-1}_{q^\alpha}+[\alpha](x^\alpha-a)^k_{q^\alpha}
\nn\\
&=q^\alpha(x^\alpha-q^{\alpha(k-1)}a)\cdot[k\alpha]
(x^\alpha-a)^{k-1}_{q^\alpha}+[\alpha](x^\alpha-a)^k_{q^\alpha}
\nn\\
&=q^\alpha[k\alpha](x^\alpha-a)^k_{q^\alpha}
+[\alpha](x^\alpha-a)^k_{q^\alpha}
\nn\\
&=(x^\alpha-a)^k_{q^\alpha}\left(q^\alpha\cdot
\frac{1-q^{k\alpha}}{1-q}+\frac{1-q^\alpha}{1-q}\right)\nn\\
&=(x^\alpha-a)^k_{q^\alpha}\cdot
\frac{q^\alpha-q^{\alpha(k+1)}+1-q^\alpha}{1-q}\nn\\
&=(x^\alpha-a)^k_{q^\alpha}\cdot
\frac{1-q^{\alpha(k+1)}}{1-q}=[(k+1)\alpha]
(x^\alpha-a)^k_{q^\alpha},
\end{align*}
which completes the induction.
\end{proof}
Let us consider now $(a-x^\alpha)^n_{q^\alpha}$.
By \eqref{npos}, we have
\begin{align*}
(a-x^\alpha)^n_{q^\alpha}&=(a-x^\alpha)(a-q^\alpha x^\alpha)
(a-q^{2\alpha}x^\alpha)\cdot\ldots\cdot
(a-q^{(n-1)\alpha}x^\alpha)\nn\\
&=(a-x^\alpha)q^\alpha(q^{-\alpha}a- x^\alpha)
q^{2\alpha}(q^{-2\alpha}a-x^\alpha)\cdot\ldots\cdot
q^{(n-1)\alpha}(q^{-(n-1)\alpha}a-x^\alpha)\nn\\
&=(-1)^nq^\frac{\alpha n(n-1)}{2}(x^\alpha-q^{-(n-1)\alpha}a)
\cdot\ldots(x^\alpha-q^{-2\alpha}a)(x^\alpha-q^{-\alpha}a)
(x^\alpha-a)\nn\\
&=(-1)^nq^\frac{\alpha n(n-1)}{2}
(x^\alpha-q^{-(n-1)\alpha}a)^n_{q^\alpha}.
\end{align*}
Now, by using Proposition~\ref{prop1}, we obtain
\begin{align}
\Dqa_x (a-x^\alpha)^n_{q^\alpha}&=\Dqa_x
\left((-1)^nq^\frac{\alpha n(n-1)}{2}
(x^\alpha-q^{-(n-1)\alpha}a)^n_{q^\alpha}\right)\nn\\
&=[n\alpha](-1)^nq^\frac{\alpha n(n-1)}{2}
(x^\alpha-q^{-(n-1)\alpha}a)^{n-1}_{q^\alpha}\nn\\
&=(-1)^n[n\alpha]q^\alpha q^{2\alpha}\ldots q^{(n-1)\alpha}
(x^\alpha-q^{-(n-1)\alpha}a)
\cdot\ldots(x^\alpha-q^{-2\alpha}a)(x^\alpha-q^{-\alpha}a)
\nn\\
&=-[n\alpha]q^\alpha q^{2\alpha}\ldots q^{(n-1)\alpha}
(q^{-(n-1)\alpha}a-x^\alpha)
\cdot\ldots(q^{-2\alpha}a-x^\alpha)(q^{-\alpha}a-x^\alpha)
\nn\\
&=-[n\alpha]
(a-q^{(n-1)\alpha}x^\alpha)
\cdot\ldots(a-q^{2\alpha}x^\alpha)(a-q^\alpha x^\alpha)
\nn\\
&=-[n\alpha](a-q^\alpha x^\alpha)^{n-1}_{q^\alpha}.
\nn
\end{align}
Thus, we can state the following result.
\begin{proposition} \label{Prop-DiffRule2}
For all $n\geq1$,
\begin{align*}
\Dqa_x(a-x^\alpha)^n_{q^\alpha}
=-[n\alpha](a-q^\alpha x^\alpha)^{n-1}_{q^\alpha}.
\end{align*}
\end{proposition}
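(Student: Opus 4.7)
The plan is to reduce the claim to Proposition~\ref{prop1} by turning the descending $q$-Pochhammer-like product $(a-x^\alpha)^n_{q^\alpha}$ into the ascending one $(x^\alpha-b)^n_{q^\alpha}$ for a cleverly chosen $b$. Concretely, I would start from the definition \eqref{npos} and pull a factor of $-q^{j\alpha}$ out of each term $(a - q^{j\alpha}x^\alpha) = -q^{j\alpha}(x^\alpha - q^{-j\alpha}a)$. Collecting the signs gives $(-1)^n$, collecting the powers of $q$ gives $q^{\alpha(0+1+\cdots+(n-1))} = q^{\alpha n(n-1)/2}$, and reversing the order of the remaining factors exhibits an ascending product which, by \eqref{npos}, is precisely $(x^\alpha - q^{-(n-1)\alpha}a)^n_{q^\alpha}$. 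This yields the identity
\begin{align*}
(a-x^\alpha)^n_{q^\alpha} = (-1)^n q^{\alpha n(n-1)/2}\,(x^\alpha - q^{-(n-1)\alpha}a)^n_{q^\alpha}.
\end{align*}

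Next, since the prefactor $(-1)^n q^{\alpha n(n-1)/2}$ does not depend on $x$ and $\Dqa_x$ is linear, I can apply Proposition~\ref{prop1} with the constant $a$ there replaced by $q^{-(n-1)\alpha}a$. This gives
\begin{align*}
\Dqa_x (a-x^\alpha)^n_{q^\alpha} = (-1)^n q^{\alpha n(n-1)/2}\,[n\alpha]\,(x^\alpha - q^{-(n-1)\alpha}a)^{n-1}_{q^\alpha}.
\end{align*}

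It remains to re-package the right-hand side back into the descending form $(a - q^\alpha x^\alpha)^{n-1}_{q^\alpha}$. I would do this by expanding $(x^\alpha - q^{-(n-1)\alpha}a)^{n-1}_{q^\alpha}$ as $\prod_{j=0}^{n-2}(x^\alpha - q^{j\alpha - (n-1)\alpha}a)$, distributing one factor of $-q^{j\alpha - (n-1)\alpha}$ out of each term, and matching the resulting sign and $q$-power against the prefactor $(-1)^n q^{\alpha n(n-1)/2}$. The sign collapses from $(-1)^n \cdot (-1)^{n-1}$ to $-1$, while the accumulated powers of $q$ cancel exactly, leaving the product $\prod_{j=0}^{n-2}(a - q^{(j+1)\alpha}x^\alpha) = (a - q^\alpha x^\alpha)^{n-1}_{q^\alpha}$, which is the desired conclusion.

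The only genuine obstacle is bookkeeping: one must verify that the sum of exponents $\sum_{j=0}^{n-2}(j - (n-1))\alpha$ extracted in the final step exactly cancels $\alpha n(n-1)/2$ from the prefactor, and that the $(-1)^{n-1}$ sign combines with $(-1)^n$ to give the single minus sign in the target identity. An alternative route would be direct induction on $n$ using \eqref{qaLeibnizRule} applied to the factorisation $(a-x^\alpha)^{n+1}_{q^\alpha} = (a-x^\alpha)^n_{q^\alpha}(a-q^{n\alpha}x^\alpha)$, mirroring the proof of Proposition~\ref{prop1}, but the reduction above is shorter and avoids re-proving an analogous $q$-number identity.
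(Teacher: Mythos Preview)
Your proposal is correct and follows essentially the same route as the paper: the paper also rewrites $(a-x^\alpha)^n_{q^\alpha}=(-1)^nq^{\alpha n(n-1)/2}(x^\alpha-q^{-(n-1)\alpha}a)^n_{q^\alpha}$, applies Proposition~\ref{prop1}, and then redistributes the sign and the power $q^{\alpha n(n-1)/2}$ across the $n-1$ remaining factors to recover $-[n\alpha](a-q^\alpha x^\alpha)^{n-1}_{q^\alpha}$. Your bookkeeping of the exponent $\sum_{j=0}^{n-2}(j-(n-1))\alpha=-\alpha n(n-1)/2$ and of the sign $(-1)^n\cdot(-1)^{n-1}=-1$ matches the paper's computation exactly.
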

Now, we can state that $P_n(x)=\dfrac{(x^\alpha-a)^n_
{q^\alpha}}{[n\alpha]!}$. Indeed, $P_n(a^\frac{1}{\alpha})=0$,
and
\begin{align*}
\Dqa_xP_n(x)=\Dqa_x\frac{(x^\alpha-a)^n_{q^\alpha}}
{[n\alpha]!}=\frac{[n\alpha](x^\alpha-a)^{n-1}_{q^\alpha}}
{[n\alpha]!}=\frac{(x^\alpha-a)^{n-1}_{q^\alpha}}
{[(n-1)\alpha]!}=P_{n-1}(x).
\end{align*}
Therefore, by using the
\cite[Theorem 2.1 and Theorem~8.1]{Kac2002},
we can state the following result.
\begin{theorem}
Any polynomial or formal power series function $f(x)$ can be
expressed via the generalized conformable fractional
$q$-Taylor expansion about
$x=a^\frac{1}{\alpha}$ as
\begin{align*}
f(x)=\sum\limits_{n\geq 0}(\Dqa_x)^nf(a^\frac{1}{\alpha})\cdot
\frac{(x^\alpha-a)^n_{q^\alpha}}{[n\alpha]!}.
\end{align*}
\end{theorem}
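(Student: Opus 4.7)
The plan is to invoke the general $q$-Taylor principle of Kac--Cheung's Theorem~2.1 and Theorem~8.1, whose hypotheses have in effect already been set up above. Setting $P_n(x) = (x^\alpha-a)^n_{q^\alpha}/[n\alpha]!$, I would first record three structural features of this family: (i)~$P_0\equiv 1$; (ii)~$P_n(a^{1/\alpha}) = 0$ for $n \geq 1$, owing to the factor $(x^\alpha - a)$; and (iii)~$\Dqa_x P_n = P_{n-1}$, which is exactly Proposition~\ref{prop1} divided through by $[n\alpha]!$. Regarded as a polynomial in the variable $y := x^\alpha$, each $P_n$ has degree exactly $n$ with nonzero leading coefficient $1/[n\alpha]!$, so the family $\{P_n\}_{n\geq 0}$ is a triangular basis of the space of polynomials, and of formal power series, in $y$.

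Granted this, Kac--Cheung's Theorems~2.1 and~8.1 --- applied in the $q^\alpha$-calculus on the variable $y$ --- yield a unique expansion $f(x) = \sum_{n\geq 0} c_n P_n(x)$. The coefficients are then isolated by applying $(\Dqa_x)^k$ to both sides: by linearity and property~(iii) one gets $(\Dqa_x)^k f(x) = \sum_{n \geq k} c_n P_{n-k}(x)$, and evaluation at $x = a^{1/\alpha}$, combined with (i)--(ii), collapses the right-hand side to a single surviving term $c_k$. Substituting $c_k = (\Dqa_x)^k f(a^{1/\alpha})$ back and writing $P_n$ explicitly delivers the claimed Taylor expansion.

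The only delicate step is the transfer from the Kac--Cheung setting, which is phrased for the Jackson derivative $\Dq$, to our conformable operator $\Dqa_x$. The bridge is direct: from \eqref{Definition-Dqa} one verifies that, on functions of $x^\alpha$, the operator $\Dqa_x$ coincides with $[\alpha]$ times the Jackson $q^\alpha$-derivative in the variable $y = x^\alpha$. Iterating $n$ times introduces a factor $[\alpha]^n$ that is precisely absorbed when rewriting the $q^\alpha$-factorial as $[n\alpha]!/[\alpha]^n$, so Kac--Cheung's expansion in $y$ translates verbatim into the stated expansion in $x$. Uniqueness of the expansion in the basis $\{P_n\}$ then completes the argument.
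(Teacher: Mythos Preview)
Your proposal is correct and follows essentially the same route as the paper: the paper establishes the properties $P_0\equiv 1$, $P_n(a^{1/\alpha})=0$, and $\Dqa_x P_n=P_{n-1}$ in the discussion preceding the theorem and then simply invokes \cite[Theorem~2.1 and Theorem~8.1]{Kac2002} without further comment. Your write-up adds the explicit coefficient-extraction step and, more usefully, the change of variables $y=x^\alpha$ together with the identity $\Dqa_x=[\alpha]\,D^{q^\alpha}_y$ and $[n\alpha]!=[\alpha]^n[n]_{q^\alpha}!$, which makes the appeal to Kac--Cheung (stated for ordinary polynomials and the Jackson derivative) rigorous rather than merely suggestive.
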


Let us define one more $q$-deformed conformable fractional
exponential function
\begin{align}
\Eqa(x)=\sum_{j\geq 0}q^{\alpha j(j-1)/2}	
\frac{x^{\alpha j}}{[j\alpha]!}
=(1+(1-q)x^\alpha)^\infty_{q^\alpha}. 
\label{Definition-Eqa}
\end{align}
It is easy to see that $\Eqa(0)=1$. By using
\eqref{dqa-monomial}, we obtain that
\begin{align*}
\Dqa_x\Eqa(ax)&=\sum_{j\geq 1}q^{\alpha j(j-1)/2}	
\frac{a^{\alpha j}x^{\alpha (j-1)}}{[(j-1)\alpha]!}
=\sum_{j\geq 0}q^{\alpha (j+1)j/2}	
\frac{a^{\alpha (j+1)}x^{\alpha j}}{[j\alpha]!}\\
&=a^\alpha\sum_{j\geq 0}q^{\alpha j(j-1)/2}	
\frac{q^{\alpha j}a^{\alpha j}x^{\alpha j}}{[j\alpha]!}
=a^\alpha\Eqa(qax).
\end{align*}
Let us define the $q$-deformed conformable fractional Gamma function for some
$n\geq1$:
\begin{align}
\Gqa(n+1)=\int_{0}^{\infty}x^{\alpha n}\frac{1}{\eqa(qx)}
\dqa x.
\label{Definition-Gamma-q-a}
\end{align}

\begin{proposition}
For all $n>0$, the function $\Gqa(n+1)$ defined by
\eqref{Definition-Gamma-q-a} satisfies the recurrence relation
\begin{align*}
\Gqa(n+1)=[n\alpha]\Gqa(n),
\end{align*}
with the initial condition $\Gqa(1)=1$.
\end{proposition}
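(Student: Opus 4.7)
The plan is to reduce the computation to a single integration by parts, using the fact (from \eqref{inverse-eqa-Derivative} with $a=1$) that $\Dqa_x \frac{1}{\eqa(x)} = -\frac{1}{\eqa(qx)}$. This identity is what makes the definition \eqref{Definition-Gamma-q-a} a natural analogue of the classical Euler integral for $\Gamma$: the integrand is, up to sign, a conformable $q$-derivative, so the classical argument ``integrate by parts, the exponential eats the monomial'' carries over verbatim.

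First I would verify the initial condition. Substituting $n=0$ (and using the identity above) gives $\Gqa(1) = \int_0^\infty \frac{1}{\eqa(qx)}\,\dqa x = -\int_0^\infty \Dqa_x\!\left(\frac{1}{\eqa(x)}\right)\dqa x$, which by the fundamental theorem of $q$-calculus equals $-\bigl[\tfrac{1}{\eqa(x)}\bigr]_0^\infty = 1$, since $\eqa(0)=1$ and $1/\eqa(x)\to 0$ as $x\to\infty$ by the infinite-product form of $\eqa$.

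For the recurrence, I would rewrite $\Gqa(n+1)$ as $-\int_0^\infty \bigl(\Dqa_x\tfrac{1}{\eqa(x)}\bigr) x^{\alpha n}\,\dqa x$ and apply the $q,\alpha$-integration by parts formula \eqref{qaIntegrationByParts} with $f(x)=\frac{1}{\eqa(x)}$ and $g(x)=x^{\alpha n}$. The boundary term is $-\bigl[\tfrac{x^{\alpha n}}{\eqa(x)}\bigr]_0^\infty$, which vanishes at $x=0$ because $n\geq 1$ and at $x=\infty$ because $\eqa$ grows faster than any $\alpha$-monomial. Using \eqref{dqa-monomial} to compute $\Dqa_x x^{\alpha n} = [n\alpha]\,x^{\alpha(n-1)}$, the surviving integral is $[n\alpha]\int_0^\infty x^{\alpha(n-1)}\frac{1}{\eqa(qx)}\,\dqa x = [n\alpha]\,\Gqa(n)$.

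I expect the only genuine subtlety to be justifying the vanishing of the boundary term at $x=\infty$ inside the Jackson-type improper $q$-integral; this is the point where one implicitly uses the infinite-product representation $\eqa(x) = ((1-q)x^\alpha;q^\alpha)_\infty$ to control growth. Everything else is a mechanical application of two identities already established in the preceding section.
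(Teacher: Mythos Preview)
Your proposal is correct and matches the paper's own proof essentially line for line: both compute $\Gqa(1)$ by recognizing the integrand as $-\Dqa_x\eqa^{-1}(x)$, and both obtain the recurrence via the integration-by-parts formula \eqref{qaIntegrationByParts} with $f(x)=1/\eqa(x)$ and $g(x)=x^{\alpha n}$, using \eqref{dqa-monomial} for $\Dqa_x x^{\alpha n}$. The paper nominally casts the argument as an induction on $n$, but the ``induction step'' is exactly your direct integration-by-parts computation, and the paper similarly glosses over the vanishing of the boundary term at infinity that you flagged.
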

\begin{proof}
We proceed the proof by induction on $n$. For $n=0$ we have
\begin{align*}
\Gqa(1)&=\int_{0}^{\infty}\frac{1}{\eqa(qx)}\,\dqa x
=-\int_{0}^{\infty}\Dqa_x\frac{1}{\eqa(x)}
=-\frac{1}{\eqa(x)}\Big|_0^\infty
=1.
\end{align*}
Let us assume that the claim holds for $k-1$, and let us
prove it for $k$.
Let us consider now the function $\Gqa(k+1)$ for some $k$.
By \eqref{Definition-Gamma-q-a}, we have
\begin{align*}
\Gqa(k+1)=\int_{0}^{\infty}x^{\alpha k}\frac{1}{\eqa(qx)}\,
\dqa x,
\end{align*}
from where, by rearranging and using \eqref{qaLeibnizRule},
we get
\begin{align*}
\Gqa(k+1)&=-
\int_{0}^{\infty}x^{\alpha k}
\left(\Dqa_x\frac{1}{\eqa(x)}\right)\,\dqa x\nn\\
&=-x^{\alpha k}\cdot\frac{1}{\eqa(x)}\Big|_0^\infty+
\int_{0}^{\infty}\left(\Dqa_x x^{\alpha k}\right)
\frac{1}{\eqa(qx)}\,\dqa x\nn\\
&=[k\alpha]\int_{0}^{\infty}
x^{\alpha(k-1)}\frac{1}{\eqa(qx)}\,\dqa x\nn\\
&=[k\alpha]\Gqa(k),
\end{align*}
which completes the proof.
\end{proof}
One can immediately obtain from the last proposition the following
result.
\begin{corollary}\label{Prop-Gqa-factorial}
For all natural $n$ it holds that
\begin{align*}
\Gqa(n+1)=[n\alpha]! 
\end{align*}
\end{corollary}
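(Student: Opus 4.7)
The plan is to derive the corollary as an immediate consequence of the recurrence relation $\Gqa(n+1)=[n\alpha]\Gqa(n)$ established in the preceding proposition, together with the base case $\Gqa(1)=1$ proved there. Since the recurrence has exactly the same shape as the recurrence $[n\alpha]!=[n\alpha]\cdot[(n-1)\alpha]!$ defining the $q,\alpha$-factorial, and the two base values coincide ($\Gqa(1)=1=[0]!$ by the convention $[0]!=1$ stated right after the definition of $\eqa$), the two sequences must agree term by term.

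Concretely, I would proceed by induction on $n$. For the base case $n=0$ one has $\Gqa(1)=1=[0]!$. Assuming $\Gqa(k)=[(k-1)\alpha]!$, the proposition gives
\begin{align*}
\Gqa(k+1)=[k\alpha]\,\Gqa(k)=[k\alpha]\cdot[(k-1)\alpha]!=[k\alpha]!,
\end{align*}
which closes the induction.

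There is no real obstacle here; the work was already done in establishing the recurrence and evaluating $\Gqa(1)$ via the identity $\Dqa_x(1/\eqa(x))=-1/\eqa(qx)$ (a special case of \eqref{inverse-eqa-Derivative}) and the boundary values of $1/\eqa(x)$. The only point that warrants a brief mention is the initial condition: one must observe that $[0]!=1$ by the convention set earlier, so that the equality $\Gqa(1)=[0]!$ is literal and the induction is anchored correctly.
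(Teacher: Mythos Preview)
Your proposal is correct and matches the paper's approach: the paper simply states that the corollary follows immediately from the preceding proposition (the recurrence $\Gqa(n+1)=[n\alpha]\Gqa(n)$ with $\Gqa(1)=1$), and your induction makes this ``immediate'' step explicit.
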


The function $\Gqa(n)$ defined as \eqref{Definition-Gamma-q-a}
is a $q$-deformed conformable fractional extension of the
$\Gamma$-function. It is well known that $\Gamma$-function
is closely related to the $B$-function, that is for the
$B$-function defined as
\begin{align*}
B(m,n)=\int_{0}^{1}x^{m-1}(1-x)^{n-1}dx,
\end{align*}
holds that
\begin{align}
B(m,n)=\frac{\Gamma(m)\Gamma(n)}{\Gamma(m+n)}.
\label{Beta-Gamma-relation}
\end{align}
Let us define the following function
\begin{align*}
\Bqa(m,n)=\int_{0}^{1}x^{\alpha(m-1)}(1-q^\alpha x^\alpha)^
{n-1}_{q^\alpha}\dqa x.
\end{align*}
With the notations $f(qx)=(1-q^\alpha x^\alpha)^{n-1}_
{q^\alpha}$ and $\Dqa_xg(x)=x^{\alpha(m-1)}\dqa x$, we obtain
$f(x)=(1-x^\alpha)^{n-1}_{q^\alpha}$ and
$g(x)=\dfrac{x ^{\alpha m}}{[\alpha m]}$. Therefore, by using
Proposition \ref{Prop-DiffRule2}, we get $\Dqa_xf(x)=-[(n-1)\alpha]
(1-q^\alpha x^\alpha)^{n-2}_{q^\alpha}$. Applying
\eqref{qaIntegrationByParts} with our notations yields
\begin{align}
\Bqa(m,n)&=(1-x^\alpha)^{n-1}_{q^\alpha}\frac{x^{\alpha m}}
{[\alpha m]}\Big|_0^1+\int_{0}^{1}[(n-1)\alpha](1-q^\alpha
x^\alpha)^{n-2}_{q^\alpha}\frac{x^{\alpha m}}{[\alpha m]}\dqa
x\nn\\
&=\frac{[(n-1)\alpha]}{[\alpha m]} \int_{0}^{1}x^{\alpha m}
(1-q^\alpha x^\alpha)^{n-2}_{q^\alpha}\dqa x\label{a2}.
\end{align}
Thus, by assuming $m$ and $n$ are natural numbers, we obtain
\begin{align}
\Bqa(m,n)&=\int_{0}^{1}x^{\alpha(m-1)}(1-q^\alpha x^\alpha)^
{n-1}_{q^\alpha}\dqa x
=\frac{[(n-1)\alpha]}{[\alpha m]} \int_{0}^{1}x^{\alpha m}
(1-q^\alpha x^\alpha)^{n-2}_{q^\alpha}\dqa x\nn\\
&=\frac{[(n-1)\alpha]}{[\alpha m]}\frac{[(n-2)\alpha]}
{[\alpha (m+1)]}\int_{0}^{1}x^{\alpha (m+1)}
(1-q^\alpha x^\alpha)^{n-3}_{q^\alpha}\dqa x\nn\\
&=\ldots\nn\\
&=\frac{[(n-1)\alpha]}{[m\alpha ]}\frac{[(n-2)\alpha]}
{[(m+1)\alpha ]}\ldots\frac{[2\alpha]}{[(m+n-3)\alpha]}
\int_{0}^{1}x^{\alpha (m+n-3)}
(1-q^\alpha x^\alpha)^1_{q^\alpha}\dqa x\nn\\
&=\frac{[(n-1)\alpha]}{[m\alpha ]}\frac{[(n-2)\alpha]}
{[(m+1)\alpha ]}\ldots\frac{[2\alpha]}{[(m+n-3)\alpha]}\nn\\
&\cdot
\left(\frac{x^{\alpha(m+n-2)}}{[(m+n-2)\alpha]}(1-x^\alpha)
\Big|_0^1+\int_0^1[\alpha]\frac{x^{\alpha(m+n-2)}}
{[(m+n-2)\alpha]}\,\dqa x\right)\nn\\
&=\frac{[(n-1)\alpha]\cdot[(n-2)\alpha]\cdot\ldots[2\alpha]
\cdot[\alpha]}{[m\alpha]\cdot[(m+1)\alpha]\cdot\ldots\cdot
[(m+n-3)\alpha]\cdot[(m+n-2)\alpha]}\cdot
\frac{x^{\alpha(m+n-1)}}{[\alpha(m+n-1)]}\Big|_0^1\nn\\
&=\frac{[(n-1)\alpha]!\cdot[(m-1)\alpha]!}{[(m+n-1)\alpha]!}
\nn\\
&=\frac{\Gqa(n)\Gqa(m)}{\Gqa(m+n)}.
\label{qaBeta-Gamma-relation}
\end{align}
It is easy to see that for $\alpha=q=1$,
\eqref{qaBeta-Gamma-relation} turns into
\eqref{Beta-Gamma-relation}. Thus our functions $\Gqa$ and
$\Bqa$ are $q$-deformed conformable fractional extensions of
the well known $\Gamma-$ and $B-$function, respectively.
We can state the following result.
\begin{proposition}
For all natural $m$, $n$ it holds that
\begin{align*}
\Bqa(m,n)=\frac{\Gqa(m)\Gqa(n)}{\Gqa(m+n)}.
\end{align*}
\end{proposition}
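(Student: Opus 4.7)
The plan is to prove the identity by iterated integration by parts, reducing the exponent on the $q^\alpha$-Pochhammer factor by one unit at a time. Concretely, in
\[
\Bqa(m,n)=\int_0^1 x^{\alpha(m-1)}(1-q^\alpha x^\alpha)^{n-1}_{q^\alpha}\dqa x,
\]
I would set $f(qx)=(1-q^\alpha x^\alpha)^{n-1}_{q^\alpha}$, so that $f(x)=(1-x^\alpha)^{n-1}_{q^\alpha}$, and take $\Dqa_x g(x)=x^{\alpha(m-1)}$, so that $g(x)=x^{\alpha m}/[\alpha m]$ by the monomial rule in \eqref{dqa-monomial}.

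Proposition~\ref{Prop-DiffRule2} then yields $\Dqa_x f(x)=-[(n-1)\alpha](1-q^\alpha x^\alpha)^{n-2}_{q^\alpha}$, and the boundary term $f(x)g(x)\big|_0^1$ vanishes (at $x=0$ because $g(0)=0$; at $x=1$ because $(1-x^\alpha)^{n-1}_{q^\alpha}$ contains the factor $(1-x^\alpha)$, which is zero there). Substituting into \eqref{qaIntegrationByParts} produces the one-step recursion
\[
\Bqa(m,n)=\frac{[(n-1)\alpha]}{[\alpha m]}\int_0^1 x^{\alpha m}(1-q^\alpha x^\alpha)^{n-2}_{q^\alpha}\dqa x,
\]
which is exactly equation~\eqref{a2}.

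Iterating this reduction $n-1$ times strips off all remaining factors of the Pochhammer, ending in a pure monomial integral $\int_0^1 x^{\alpha(m+n-2)}\dqa x$. Evaluating the latter via \eqref{dqa-monomial} and collecting the accumulated ratios of $q$-numbers gives the closed form $\Bqa(m,n)=\dfrac{[(n-1)\alpha]!\,[(m-1)\alpha]!}{[(m+n-1)\alpha]!}$, and Corollary~\ref{Prop-Gqa-factorial} converts this immediately to $\Gqa(m)\Gqa(n)/\Gqa(m+n)$, which is the claim.

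The main (and essentially only) obstacle is bookkeeping: tracking the sliding indices of the $q$-numbers through the iteration and checking at each stage that the boundary contributions still vanish. Both are handled uniformly, since the factor $(1-x^\alpha)^k_{q^\alpha}$ continues to contain $(1-x^\alpha)$ as long as $k\geq 1$, so the boundary term is zero at every intermediate reduction step; only at the terminal pure-monomial stage is the endpoint evaluation of $x^{\alpha(m+n-1)}/[\alpha(m+n-1)]$ at $x=1$ nonzero, and it supplies the last telescoping factor.
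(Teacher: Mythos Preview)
Your proposal is correct and follows essentially the same route as the paper: the same choice of $f$ and $g$ for integration by parts via \eqref{qaIntegrationByParts}, the same use of Proposition~\ref{Prop-DiffRule2} to differentiate $f$, the same one-step recursion \eqref{a2}, and the same iteration down to the pure monomial integral. Your treatment is slightly more explicit than the paper's in justifying why the boundary term $f(x)g(x)\big|_0^1$ vanishes at each intermediate step, which the paper leaves tacit.
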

This proposition may be extended for all positive $m,n$.

\section{$q$-deformed conformable fractional natural
transform}
We define now a $q$-deformed conformable fractional
natural transform as
\begin{align}
\Nqa(f(t))=\int_0^\infty f(ut)\frac{1}{\eqa(qst)}\,\dqa t,
\hspace{1cm}s>0. \label{Definition-Nqa}
\end{align}
Then we have
\begin{align*}
\Nqa(1)&=\int_0^\infty \frac{1}{\eqa(qst)}\,\dqa t
=-\frac{1}{s^\alpha}\int_0^\infty \Dqa_t\frac{1}{\eqa(st)}\,
\dqa t\nn\\
&=-\frac{1}{s^\alpha}\frac{1}{\eqa(st)}\Big|_0^\infty
=\frac{1}{s^\alpha}.
\end{align*}
Let us now to obtain a transform of $\alpha$-monomial.
\begin{align*}
\Nqa(t^{\alpha N})&=\int_0^\infty u^{\alpha N}t^{\alpha N}
\frac{1}{\eqa(qst)}\,\dqa t\nn\\
&=-\frac{1}{s^\alpha}u^{\alpha N}\int_0^\infty
\left(\Dqa_t\frac{1}{\eqa(st)}\right)t^{\alpha N}\dqa t.
\end{align*}
Integrating by parts of the last equation leads to
\begin{align}
\Nqa(t^{\alpha N})&=-\frac{u^{\alpha N}}{s^\alpha}\left\{
\frac{1}{\eqa(st)}\,t^{\alpha N}\Big|_0^\infty
-\int_0^\infty\frac{1}{\eqa(qst)}\left(\Dqa_t
t^{\alpha N}\right)\dqa t\right\}\nn\\
&=\frac{u^{\alpha N}}{s^\alpha}\int_0^\infty[N\alpha]
\,t^{\alpha(N-1)}\frac{1}{\eqa(qst)}\,\dqa t\nn\\
&=\frac{u^\alpha}{s^\alpha}[N\alpha]\int_0^\infty
u^{\alpha(N-1)}t^{\alpha(N-1)}\frac{1}{\eqa(qst)}\,\dqa t\nn\\
&=\frac{u^\alpha}{s^\alpha}[N\alpha]\Nqa(t^{\alpha(N-1)}
\label{rec1}.
\end{align}
Thus, by \eqref{rec1}, we obtain
\begin{align}
\Nqa(t^{\alpha N})&=\frac{u^\alpha}{s^\alpha}[N\alpha]\cdot
\frac{u^\alpha}{s^\alpha}[(N-1)\alpha]\cdot\ldots
\cdot\frac{u^\alpha}{s^\alpha}[\alpha]\cdot
\frac{1}{s^\alpha}=\frac{u^{\alpha N}}{s^{\alpha(N+1)}}[N\alpha]!,
\label{transf-tan}
\end{align}
or, by applying Corollary~\ref{Prop-Gqa-factorial},
\begin{align*}
\Nqa(t^{\alpha N})=\frac{u^{\alpha N}}{s^{\alpha (N+1)}}
\Gqa(N+1).
\end{align*}
Hence, we can state the following result.
\begin{proposition}
For all integer $N\geq 0$,
\begin{align*}
\Nqa(t^{\alpha N})=
\frac{u^{\alpha N}}{s^{\alpha(N+1)}}\Gqa(N+1).
\end{align*}
\end{proposition}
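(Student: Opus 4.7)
The plan is to prove the formula by induction on $N$, using integration by parts to peel off one factor of $t^\alpha$ at a time until we reach the constant function, whose transform is already easy to compute. The identity \eqref{inverse-eqa-Derivative}, namely $\Dqa_x(1/\eqa(ax)) = -a^\alpha/\eqa(qax)$, is the central tool: it lets us express the weight $1/\eqa(qst)$ appearing in the definition \eqref{Definition-Nqa} as $-s^{-\alpha}\Dqa_t(1/\eqa(st))$, so that every occurrence of the kernel can be treated as a $\Dqa$-antiderivative.

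For the base case $N=0$, I would compute
\begin{align*}
\Nqa(1) = \int_0^\infty \frac{1}{\eqa(qst)}\,\dqa t = -\frac{1}{s^\alpha}\int_0^\infty \Dqa_t\frac{1}{\eqa(st)}\,\dqa t = -\frac{1}{s^\alpha}\cdot\frac{1}{\eqa(st)}\Big|_0^\infty = \frac{1}{s^\alpha},
\end{align*}
which matches $\frac{u^{0}}{s^{\alpha}}\Gqa(1)$ since $\Gqa(1)=1$.

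For the inductive step, assume the claim for $N-1$ and consider $\Nqa(t^{\alpha N})$. I would apply the integration-by-parts rule \eqref{qaIntegrationByParts} with $f(t) = 1/\eqa(st)$ (so that $f(qt) = 1/\eqa(qst)$ and $\Dqa_t f(t) = -s^\alpha/\eqa(qst)$ by \eqref{inverse-eqa-Derivative}) and $g(t) = u^{\alpha N}t^{\alpha N}$, for which $\Dqa_t g(t) = u^{\alpha N}[N\alpha]t^{\alpha(N-1)}$ by \eqref{dqa-monomial}. The boundary term $f(t)g(t)\big|_0^\infty$ vanishes at $t=0$ because $t^{\alpha N}=0$ for $N\ge1$, and at $t=\infty$ because $1/\eqa(st)\to 0$. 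What remains is exactly the recurrence
\begin{align*}
\Nqa(t^{\alpha N}) = \frac{u^\alpha}{s^\alpha}[N\alpha]\,\Nqa(t^{\alpha(N-1)}),
\end{align*}
which, combined with the induction hypothesis, gives $\Nqa(t^{\alpha N}) = \frac{u^{\alpha N}}{s^{\alpha(N+1)}}[N\alpha]!$, and this equals $\frac{u^{\alpha N}}{s^{\alpha(N+1)}}\Gqa(N+1)$ by Corollary \ref{Prop-Gqa-factorial}.

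The only delicate point is the vanishing of the boundary contribution at infinity, which is not formally justified in the excerpt; I would rely on the same (implicit) convergence assumption used in the definition of $\Gqa$ in \eqref{Definition-Gamma-q-a} and in the computation of $\Gqa(1)$. Once that is granted, the rest is a clean bookkeeping of the factor $u^\alpha/s^\alpha$ and the $q$-numbers $[N\alpha],[(N-1)\alpha],\ldots,[\alpha]$ produced by the $N$ successive integrations by parts.
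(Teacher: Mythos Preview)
Your proposal is correct and follows essentially the same approach as the paper: both compute $\Nqa(1)=1/s^\alpha$ directly from \eqref{inverse-eqa-Derivative}, then use integration by parts (with the same choice of $f$ and $g$) to derive the recurrence $\Nqa(t^{\alpha N})=\frac{u^\alpha}{s^\alpha}[N\alpha]\,\Nqa(t^{\alpha(N-1)})$, iterate it down to $N=0$, and invoke Corollary~\ref{Prop-Gqa-factorial}. The paper unwinds the recursion explicitly while you phrase it as induction, and it leaves the vanishing boundary term at infinity implicit just as you flag; otherwise the arguments are identical.
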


Now let us consider the transform of the two deformed
exponential functions.
\begin{align}
\Nqa(\eqa(at))&=\int_0^\infty\eqa(aut)\frac{1}{\eqa(qst)}\,
\dqa t\nn\\
&=\int_0^\infty\sum_{n=0}^\infty
\frac{a^{\alpha n}u^{\alpha n}t^{\alpha n}}{[n\alpha]!}
\frac{1}{\eqa(qst)}\,\dqa t\nn\\
&=\sum_{n=0}^\infty\frac{a^{\alpha n}}{[n\alpha]!}
\int_0^\infty u^{\alpha n}t^{\alpha n}\frac{1}{\eqa(qst)}\,
\dqa t\nn\\
&=\sum_{n=0}^\infty\frac{a^{\alpha n}}{[n\alpha]!}
\Nqa(t^{\alpha n})=\sum_{n=0}^\infty\frac{a^{\alpha n}}
{[n\alpha]!}\cdot
\frac{u^{\alpha n}}{s^{\alpha(n+1)}}[n\alpha]!
\nn\\
&=\frac{1}{s^\alpha}\sum_{n=0}^\infty
\frac{(au)^{\alpha n}}{s^{\alpha n}}
=\frac{1}{s^\alpha}\sum_{n=0}^\infty
\left(\frac{au}{s}\right)^{\alpha n}
\nn\\
&=\frac{1}{s^\alpha}
\frac{1}{1-\left(\frac{au}{s}\right)^\alpha}
=\frac{1}{s^\alpha}
\frac{s^\alpha}{s^\alpha-a^\alpha u^\alpha}
=\frac{1}{s^\alpha-a^\alpha u^\alpha},\nn
\end{align}
and
\begin{align}
\Nqa(\Eqa(at))&=\int_0^\infty\Eqa(aut)\frac{1}{\eqa(qst)}\,
\dqa t\nn\\
&=\int_0^\infty\sum_{n=0}^\infty q^\frac{\alpha n(n-1)}{2}
\frac{a^{\alpha n}u^{\alpha n}t^{\alpha n}}{[n\alpha]!}
\frac{1}{\eqa(qst)}\,\dqa t\nn\\
&=\sum_{n=0}^\infty q^\frac{\alpha n(n-1)}{2}
\frac{a^{\alpha n}}{[n\alpha]!}\int_0^\infty
u^{\alpha n}t^{\alpha n}\frac{1}{\eqa(qst)}\,\dqa t\nn\\
&=\sum_{n=0}^\infty q^\frac{\alpha n(n-1)}{2}
\frac{a^{\alpha n}}{[n\alpha]!}
\frac{u^{\alpha n}}{s^{\alpha(n+1)}}
[n\alpha]!\nn\\
&=\sum_{n=0}^\infty q^\frac{\alpha n(n-1)}{2}
\frac{(ua)^{\alpha n}}{s^{\alpha(n+1)}}.\nn
\end{align}
So we can state the following proposition.
\begin{proposition} \label{Transform-of_Exponentials}
The $q$-deformed conformable natural transforms of the
$q$-deformed conformable exponential functions are given by
\begin{align}
\Nqa(\eqa(at))&=\frac{1}{s^\alpha-a^\alpha u^\alpha},
\nn\\
\Nqa(\Eqa(at))&=\sum_{n=0}^\infty q^\frac{\alpha n(n-1)}{2}
\frac{(ua)^{\alpha n}}{s^{\alpha(n+1)}}.\nn
\end{align}
\end{proposition}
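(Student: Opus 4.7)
The plan is to reduce both transforms to the already-computed monomial transform $\Nqa(t^{\alpha N})=u^{\alpha N}s^{-\alpha(N+1)}[N\alpha]!$ by expanding the exponentials in series and exchanging sum and integral. First, I would substitute the defining power series
$\eqa(aut)=\sum_{n\ge 0}(aut)^{\alpha n}/[n\alpha]!$ and $\Eqa(aut)=\sum_{n\ge 0}q^{\alpha n(n-1)/2}(aut)^{\alpha n}/[n\alpha]!$
into the definition \eqref{Definition-Nqa} of $\Nqa$. Pulling the $n$-independent factors outside the Jackson $q$-integral and interchanging the order of summation and integration, each summand reduces to a constant times $\Nqa(t^{\alpha n})$.

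Once this step is done, the previous proposition gives $\Nqa(t^{\alpha n})[n\alpha]!^{-1}=u^{\alpha n}/s^{\alpha(n+1)}$, so the factorials cancel cleanly. For $\eqa$, the resulting series is the purely geometric
$s^{-\alpha}\sum_{n\ge 0}(au/s)^{\alpha n}$, which sums to $1/(s^\alpha-a^\alpha u^\alpha)$ in the region $|au/s|^\alpha<1$; this yields the first identity. For $\Eqa$, the extra factor $q^{\alpha n(n-1)/2}$ survives in the summand and no further simplification is expected, so I would simply record the series expression as the answer.

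The main obstacle is the interchange of sum and integral. Since the Jackson $q$-integral is itself an infinite sum $\Iqa f=[\alpha]^{-1}(1-q^\alpha)x^\alpha\sum_{j\ge 0}q^{\alpha j}f(q^j x)$ evaluated at the upper endpoint, this really amounts to interchanging two infinite sums; I would justify it by absolute convergence for $|au/s|^\alpha<1$ (equivalently $s^\alpha>|au|^\alpha$), observing that the iterated sum of absolute values is dominated by a convergent geometric series through exactly the same computation performed formally. Under this hypothesis on $s$, both identities follow rigorously; for $\Eqa$, the rapid decay $q^{\alpha n(n-1)/2}$ with $0<q<1$ in fact makes the outer series converge for all $u,a,s>0$, so after the justification the final formula holds on that larger domain as well.
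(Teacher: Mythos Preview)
Your proposal is correct and follows essentially the same approach as the paper: expand each exponential as its defining power series, interchange sum and integral, apply the monomial transform $\Nqa(t^{\alpha n})=u^{\alpha n}s^{-\alpha(n+1)}[n\alpha]!$ so that the factorials cancel, and then sum the resulting geometric series for $\eqa$ while leaving the $q^{\alpha n(n-1)/2}$-weighted series for $\Eqa$ as is. Your added remarks on absolute convergence for the interchange are a bonus, since the paper carries out the computation purely formally.
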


Let us consider now the transform of the deformed
trigonometric functions \eqref{cqa-exp},
\eqref{sqa-exp}.
\begin{align}
\Nqa(\cqa(at))&=\frac{1}{2}\left(\Nqa(\eqa(i^\frac{1}{\alpha}
at)+\Nqa(\eqa((-i)^\frac{1}{\alpha}at))\right)\nn\\
&=\frac{1}{2}\left(\frac{1}{s^\alpha-
(i^\frac{1}{\alpha}a)^\alpha u^\alpha}+\frac{1}
{s^\alpha -((-i)^\frac{1}{\alpha}a)^\alpha u^\alpha}\right)
\nn\\
&=\frac{1}{2}\left(\frac{1}{s^\alpha-i(au)^\alpha}
+\frac{1}{s^\alpha +i(au)^\alpha}\right)\nn\\
&=\frac{1}{2}\frac{s^\alpha +i(au)^\alpha+s^\alpha-i
(au)^\alpha}{(s^\alpha-i(au)^\alpha)(s^\alpha
+i(au)^\alpha)}\nn\\
&=\frac{s^\alpha}{s^{2\alpha}+(au)^{2\alpha}}.\nn
\end{align}
In the same way we obtain
\begin{align}
\Nqa(\sqa(at))&=\frac{1}{2i}\left(\Nqa(\eqa(i^\frac{1}{\alpha}
at)-\Nqa(\eqa((-i)^\frac{1}{\alpha}at))\right)\nn\\
&=\frac{1}{2i}\left(\frac{1}{s^\alpha-
(i^\frac{1}{\alpha}a)^\alpha u^\alpha}-\frac{1}
{(s^\alpha -((-i)^\frac{1}{\alpha}a)^\alpha u^\alpha}\right)
\nn\\
&=\frac{1}{2i}\left(\frac{1}{s^\alpha-i(au)^\alpha}
-\frac{1}{s^\alpha +i(au)^\alpha}\right)\nn\\
&=\frac{1}{2i}\frac{s^\alpha +i(au)^\alpha-s^\alpha+i
(au)^\alpha}{(s^\alpha-i(au)^\alpha)(s^\alpha
+i(au)^\alpha)}\nn\\
&=\frac{(au)^\alpha}{s^{2\alpha}+(au)^{2\alpha}},\nn
\end{align}
and we can state the following proposition.
\begin{proposition} \label{Transform-of-Trigo}
The deformed conformable fractional natural transform
of deformed trigonomeric functions defined by \eqref{cqa-exp}
and \eqref{sqa-exp} is given by
\begin{align}
\Nqa(\cqa(t))&=\frac{s^\alpha}{s^{2\alpha}+u^{2\alpha}},
\nn\\
\Nqa(\sqa(t))&=\frac{u^\alpha}{s^{2\alpha}+u^{2\alpha}}.
\nn
\end{align}
\end{proposition}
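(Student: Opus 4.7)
The plan is to exploit the explicit representations \eqref{cqa-exp} and \eqref{sqa-exp} of $\cqa$ and $\sqa$ as half-sum and half-difference of $\eqa$ evaluated at the complex arguments $\pm i^{1/\alpha}t$, and then to reduce the computation to Proposition~\ref{Transform-of_Exponentials}, which already supplies the natural transform of $\eqa(at)$ in closed form.

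First I would invoke the linearity of $\Nqa$ (immediate from linearity of the conformable fractional $q$-integral) to write
\begin{align*}
\Nqa(\cqa(t)) &= \tfrac{1}{2}\bigl(\Nqa(\eqa(i^{1/\alpha} t)) + \Nqa(\eqa((-i)^{1/\alpha} t))\bigr),\\
\Nqa(\sqa(t)) &= \tfrac{1}{2i}\bigl(\Nqa(\eqa(i^{1/\alpha} t)) - \Nqa(\eqa((-i)^{1/\alpha} t))\bigr).
\end{align*}
Next I would substitute $a=i^{1/\alpha}$ and $a=(-i)^{1/\alpha}$ into the identity $\Nqa(\eqa(at))=1/(s^\alpha-a^\alpha u^\alpha)$ of Proposition~\ref{Transform-of_Exponentials}; using $(i^{1/\alpha})^\alpha=i$ and $((-i)^{1/\alpha})^\alpha=-i$, this produces two terms with denominators $s^\alpha\mp iu^\alpha$. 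Placing them over the common denominator $(s^\alpha-iu^\alpha)(s^\alpha+iu^\alpha)=s^{2\alpha}+u^{2\alpha}$ and collecting real and imaginary parts in the numerator yields exactly the two claimed expressions.

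The main obstacle is conceptual rather than technical: one must be comfortable extending the formula from Proposition~\ref{Transform-of_Exponentials} to the complex parameters $\pm i^{1/\alpha}$ and consistently interpreting the fractional power $(\pm i^{1/\alpha})^\alpha=\pm i$, exactly as was done in the defining relation \eqref{qa-e-sin-cos} of $\cqa$ and $\sqa$. Once this is granted, the remainder is a short manipulation of rational functions of $s^\alpha$ and $u^\alpha$, formally identical to the classical Laplace-transform derivation of $\cos$ and $\sin$ and in fact already subsumed by the general-$a$ computation carried out in the paragraphs immediately preceding the statement, specialized to $a=1$.
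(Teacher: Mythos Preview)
Your proposal is correct and matches the paper's approach exactly: the paper uses the representations \eqref{cqa-exp}, \eqref{sqa-exp}, applies linearity of $\Nqa$, invokes Proposition~\ref{Transform-of_Exponentials} with $a=i^{1/\alpha}$ and $a=(-i)^{1/\alpha}$, and combines the resulting fractions over the common denominator $s^{2\alpha}+(au)^{2\alpha}$, then specializes to $a=1$. You have even identified that this computation is carried out for general $a$ just before the proposition is stated.
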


Suppose that function $f(t)$ has a polynomial or formal power
series expansion in $\alpha$-monomials $t^{\alpha n}$. Let us
denote such function $f(t)$ by $\fa1(t)$. We
consider now the transform of a derivative
\begin{align}
\Nqa(\Dqa_t\fa1(t))&=\int_0^\infty\left(\Dqa_t\fa1\right)(ut)
\frac{1}{\eqa(qst)}\,\dqa t\nn\\
&=\frac{1}{u^\alpha}\int_0^\infty\left(\Dqa_t\fa1\right)(y)
\frac{1}{\eqa(qs\frac{y}{u})}\,\dqa y\nn\\
&=\frac{1}{u^\alpha}\fa1(y)\frac{1}{\eqa(qs\frac{y}{u})}
\Big|_0^\infty
+\frac{1}{u^\alpha}\frac{s^\alpha}{u^\alpha}\int_0^\infty \fa1(y)
\frac{1}{\eqa(qs\frac{y}{u})}\,\dqa y\nn\\
&=-\frac{1}{u^\alpha}\fa1(0)+\frac{s^\alpha}{u^\alpha}\int_0^\infty
\fa1(ut)\frac{1}{\eqa(qst)}\,\dqa t
\nn\\
&=-\frac{1}{u^\alpha}\fa1(0)+\frac{s^\alpha}{u^{\alpha}}
\Nqa(\fa1(t)).\nn
\end{align}
Let us rewrite it as
\begin{align}
\Nqa(\Dqa_t \fa1(t))=\frac{s^\alpha}{u^\alpha}\Nqa
(\fa1(t))-\frac{1}{u^\alpha}\fa1(0).\label{TransformOfDerivative}
\end{align}
Therefore,
\begin{align}
\Nqa((\Dqa_t)^2\fa1((t))&=\frac{s^\alpha}
{u^\alpha}\Nqa(\Dqa_t\fa1(t))-\frac{1}{u^\alpha}(\Dqa_t\fa1)(0)
\nn\\
&=\frac{s^\alpha}
{u^\alpha}\left(\frac{s^\alpha}{u^\alpha}\Nqa
(\fa1(t))-\frac{1}{u^\alpha}\fa1(0)\right)-\frac{1}{u^\alpha}
(\Dqa_t\fa1)(0)\nn\\
&=\left(\frac{s^\alpha}
{u^\alpha}\right)^2\Nqa(\fa1(t))-\frac{1}{u^\alpha}
\frac{s^\alpha}{u^\alpha}\fa1(0)-\frac{1}{u^\alpha}
(\Dqa_t\fa1)(0).\nn
\end{align}
Thus, we can state the following result.
\begin{theorem}\label{Transform-of-Derivative}
Suppose that function $\fa1(t)$ has polynomials or formal
power series expansion in $\alpha$-monomials $t^\alpha$. Then
for all integer $n>0$ it holds that
\begin{align*}
\Nqa((\Dqa_t)^n\fa1(t))=\left(\frac{s^\alpha}
{u^\alpha}\right)^n\Nqa(\fa1(t))-\frac{1}{u^\alpha}
\sum_{j=0}^{n-1}\left(\frac{s^\alpha}
{u^\alpha}\right)^{n-1-j}(\Dqa_t)^j\fa1(0).
\end{align*}
\end{theorem}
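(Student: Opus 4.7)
The plan is to prove this by induction on $n$, with the base case already supplied by equation \eqref{TransformOfDerivative} and the inductive machinery driven by applying \eqref{TransformOfDerivative} once to the function $(\Dqa_t)^{n-1}\fa1(t)$. Since the author has already explicitly verified $n=1$ and $n=2$, the pattern is transparent, and essentially all the work is a telescoping/re-indexing of the sum.

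For the base case $n=1$, the claim reduces to
$\Nqa(\Dqa_t\fa1(t))=\frac{s^\alpha}{u^\alpha}\Nqa(\fa1(t))-\frac{1}{u^\alpha}\fa1(0)$,
which is precisely \eqref{TransformOfDerivative}. For the inductive step, assume the formula holds for some $n-1\geq 1$. Set $g(t)=(\Dqa_t)^{n-1}\fa1(t)$, which is again a polynomial or formal power series in $\alpha$-monomials, so that \eqref{TransformOfDerivative} applies to $g$ and yields
\begin{align*}
\Nqa((\Dqa_t)^n\fa1(t))
=\Nqa(\Dqa_t g(t))
=\frac{s^\alpha}{u^\alpha}\Nqa((\Dqa_t)^{n-1}\fa1(t))
-\frac{1}{u^\alpha}(\Dqa_t)^{n-1}\fa1(0).
\end{align*}

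Next I would substitute the inductive hypothesis for $\Nqa((\Dqa_t)^{n-1}\fa1(t))$ into the right-hand side. The $\Nqa(\fa1(t))$ term immediately becomes $(s^\alpha/u^\alpha)^n\Nqa(\fa1(t))$, and the sum inside the hypothesis, after multiplying by $s^\alpha/u^\alpha$, becomes
$-\frac{1}{u^\alpha}\sum_{j=0}^{n-2}(s^\alpha/u^\alpha)^{n-1-j}(\Dqa_t)^j\fa1(0)$.
The remaining boundary term $-\frac{1}{u^\alpha}(\Dqa_t)^{n-1}\fa1(0)$ is exactly the $j=n-1$ term that extends this sum from $j=0,\dots,n-2$ to $j=0,\dots,n-1$, completing the formula.

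There is no serious obstacle here; the only place one must be careful is the index bookkeeping when incorporating the new boundary term into the existing sum, and verifying that the exponent $n-1-j$ is consistent before and after the substitution. One should also briefly note that applying $\Dqa_t$ to an $\alpha$-monomial series yields again such a series (by \eqref{dqa-monomial}), so the hypothesis of the theorem propagates to $g(t)$ and \eqref{TransformOfDerivative} may legitimately be invoked on it at every step of the induction.
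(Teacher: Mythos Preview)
Your proposal is correct and follows essentially the same approach as the paper: induction on $n$, with the base case supplied by \eqref{TransformOfDerivative} and the inductive step obtained by applying \eqref{TransformOfDerivative} to $(\Dqa_t)^{n-1}\fa1(t)$ and then substituting the induction hypothesis. Your extra remark that $(\Dqa_t)^{n-1}\fa1(t)$ is again an $\alpha$-monomial series is a small improvement in rigor over the paper's proof, which leaves this implicit.
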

\begin{proof}
The proof is by induction on $n$. The Theorem statement
holds for $n=1$ as it shown in \eqref{TransformOfDerivative}.
Let us assume the formula holds for $k$, and let us prove it for
$k+1$. By using \eqref{TransformOfDerivative} we have
\begin{align*}
\Nqa((\Dqa_t)^{k+1}\fa1(t))=\frac{s^\alpha}{u^\alpha}\Nqa
((\Dqa_t)^k\fa1(t))-\frac{1}{u^\alpha}((\Dqa_t)^k\fa1)(0),
\end{align*}
and, by induction's assumption, we get
\begin{align}
\Nqa&((\Dqa_t)^{k+1}\fa1(t))
=-\frac{1}{u^\alpha}((\Dqa_t)^k\fa1)(0)\nn\\
&+\frac{s^\alpha}{u^\alpha}
\left(\left(\frac{s^\alpha}
{u^\alpha}\right)^k\Nqa(\fa1(t))-\frac{1}{u^\alpha}
\sum_{j=0}^{k-1}\left(\frac{s^\alpha}
{u^\alpha}\right)^{k-1-j}(\Dqa_t)^j\fa1(t)\right)
\nn\\
&=\left(\frac{s^\alpha}
{u^\alpha}\right)^{k+1}\Nqa(\fa1(t))-\frac{1}{u^\alpha}
\sum_{j=0}^{k-1}\left(\frac{s^\alpha}
{u^\alpha}\right)^{k-j}(\Dqa_t)^j\fa1(t)
-\frac{1}{u^\alpha}((\Dqa_t)^k\fa1)(0)\nn\\
&=\left(\frac{s^\alpha}
{u^\alpha}\right)^{k+1}\Nqa(\fa1(t))-\frac{1}{u^\alpha}
\sum_{j=0}^{k}\left(\frac{s^\alpha}
{u^\alpha}\right)^{k-j}(\Dqa_t)^j\fa1(t),\nn
\end{align}
which completes the proof.
\end{proof}
Let us consider now two examples of applying the conformable
fractional $q$-deformed natural transform for solving
differential equations.
\begin{example}
This example is extension of the Example~4.2.4 in
\cite{Debnath2007}. We have a differential equation
\begin{align*}
\left((\Dqa_t)^3+(\Dqa_t)^2-6\Dqa_t\right)f(t)=0,
\end{align*}
with the initial condition
\begin{align*}
f(0)=1,\quad \Dqa_tf(0)=0,\quad (\Dqa_t)^2f(0)=5.
\end{align*}
We apply our conformable fractional natural $q$-transform to
the differential equation, and, by using the
Theorem~\ref{Transform-of-Derivative}, obtain
\begin{align*}
\left(\frac{s^\alpha}{u^\alpha}\right)^3\bar{f}
&-\frac{1}{u^\alpha}\sum_{j=0}^2\left(\frac{s^\alpha}{u^\alpha
}\right)^{2-j}(\Dqa_t)^jf(0)+\left(\frac{s^\alpha}{u^\alpha}
\right)^2\bar{f}\nn\\&-\frac{1}{u^\alpha}\sum_{j=0}^1\left(
\frac{s^\alpha}{u^\alpha}\right)^{1-j}(\Dqa_t)^jf(0)
-6\cdot\frac{s^\alpha}{u^\alpha}\bar{f}+\frac{6}{u^\alpha}f(0)
=0,
\end{align*}
where $\bar{f}=\Nqa(f(t))$.
Let $w=\dfrac{s^\alpha}{u^\alpha}$. Then we get
\begin{align*}
w^3\bar{f}&-\frac{w^2}{u^\alpha}f(0)-\frac{w}{u^\alpha}
\Dqa_tf(0)-\frac{1}{u^\alpha}(\Dqa_t)^2f(0)+w^2\bar{f}\nn\\
&-\frac{w}{u^\alpha}f(0)-\frac{1}{u^\alpha}\Dqa_tf(0)
-6w\bar{f}+\frac{6}{u^\alpha}f(0)=0,
\end{align*}
and, by applying the initial conditions, we obtain
\begin{align*}
\bar{f}&=\frac{1}{u^\alpha}\frac{w^2+w-1}{w(w^2+w-6)}\nn\\
&=\frac{1}{u^\alpha}\left(\frac{1}{6w}+\frac{1}{3(w+3)}+
\frac{1}{2(w-2)}\right)\nn\\
&=\frac{1}{6}\frac{1}{s^\alpha}+\frac{1}{3}
\frac{1}{s^\alpha+3u^\alpha}+\frac{1}{2}\frac{1}
{s^\alpha-2u^\alpha}\nn\\
&=\frac{1}{6}\frac{1}{s^\alpha}+\frac{1}{3}
\frac{1}{s^\alpha-((-3)^\frac{1}{\alpha})^\alpha u^\alpha}
+\frac{1}{2}\frac{1}{s^\alpha-(2^\frac{1}{\alpha})^\alpha
u^\alpha}.
\end{align*}
Now, by using the results of the
Proposition~\ref{Transform-of_Exponentials}, we can find the
original function $f(t)$ as following
\begin{align*}
f(t)=\frac{1}{6}+\frac{1}{3}\eqa((-3)^\frac{1}{\alpha}t)
+\frac{1}{2}\eqa(2^\frac{1}{\alpha}t).
\end{align*}
One can easily see that this solution for $q=1$, $\alpha=1$
becomes $f(t)=\dfrac{1}{6}+\dfrac{1}{3}e^{-3t}+\dfrac{1}{2}
e^{2t}$, which coincides with the solution of
\cite{Debnath2007}.
\end{example}
\begin{example}
Let us consider now an extension of the differential equation
appearing in Example 4 of \cite{Zill2014}:
\begin{align*}
\Dqa_t f(t)+3f(t)=13\sqa(2^\frac{1}{\alpha}t),
\end{align*}
with the initial condition $f(0)=6$.
Again, let $\bar{f}=\Nqa(f(t))$. By applying the integral
transform to this differential equation, we obtain the
following equation
\begin{align*}
\frac{s^\alpha}{u^\alpha}\bar{f}-\frac{1}{u^\alpha}f(0)
+3\bar{f}=13\cdot\frac{2u^\alpha}{s^{2\alpha}+4u^{2\alpha}},
\end{align*}
which, by applying the initial condition, can be rewritten as
\begin{align*}
\frac{s^\alpha+3u^\alpha}{u^\alpha}\bar{f}=
\frac{26u^\alpha}{s^{2\alpha}+4u^{2\alpha}}+\frac{6}{u^\alpha}.
\end{align*}
Now we can express the transformation $\bar{f}$ as
\begin{align}
\bar{f}&=\frac{1}{s^\alpha+3u^\alpha}
\frac{5-u^{2\alpha}+6s^{2\alpha}}{s^{2\alpha}+4u^{2\alpha}}
\nn\\
&=\frac{A}{s^\alpha+3u^\alpha}+\frac{Bs^{\alpha} +Cu^{\alpha}}
{s^{2\alpha}+4u^{2\alpha}}.\label{fbar1}
\end{align}
The unknown constants $A$, $B$, $C$ can be found by comparing
two expressions for $\bar{f}$. One can easily check that
$A=8$, $B=-2$, and $C=6$. Therefore \eqref{fbar1}
can be rewritten as
\begin{align*}
\bar{f}=\frac{8}{s^\alpha+3u^\alpha}
-2\frac{s^\alpha}{s^{2\alpha}+4u^{2\alpha}}
+3\frac{2u^\alpha}{s^{2\alpha}+4u^{2\alpha}},
\end{align*}
where, by
Proposition~\ref{Transform-of_Exponentials} and
Proposition~\ref{Transform-of-Trigo}, we can obtain
the original function $f(t)$ as following
\begin{align*}
f(t)=8\eqa((-3)^\frac{1}{\alpha}t)-2\cqa(2^\frac{1}{\alpha}t)
+3\sqa(2^\frac{1}{\alpha}t).
\end{align*}
Note that for $q=1$, $\alpha=1$ we obtain the solution of
\cite{Zill2014}.
\end{example}

We have considered transforms of functions and
their derivatives. Let us consider now derivatives of the
transform. We would like to emphasize that a function $f(t)$
is, actually, polynomial or formal power series in
$t^\alpha$-monomials. Let us denote by $\Rqa(u,s)$ the
conformable fractional $q$-deformed natural transform
\eqref{Definition-Nqa}. With the notation
$\eqa^{-1}(t)=\frac{1}{\eqa(t)}$ we can state the following
lemma.
\begin{lemma} \label{Lemma1}
For all integer $n>0$,
\begin{align*}
(\Dqa_s)^n\eqa^{-1}(q^{-(n-1)}st)=(-1)^n t^{\alpha n}
q^{-\binom{n}{2}\alpha}\eqa^{-1}(qst).
\end{align*}
\end{lemma}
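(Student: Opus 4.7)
The natural approach is induction on $n$, with the key ingredient being formula \eqref{inverse-eqa-Derivative} reinterpreted with respect to the variable $s$: for any $a$ independent of $s$, $\Dqa_s \eqa^{-1}(as) = -a^\alpha \eqa^{-1}(qas)$.

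For the base case $n=1$, set $a = t$ in \eqref{inverse-eqa-Derivative}, which yields $\Dqa_s \eqa^{-1}(st) = -t^\alpha \eqa^{-1}(qst)$. This matches the right-hand side of the claim since $(-1)^1 t^\alpha q^{-\binom{1}{2}\alpha} = -t^\alpha$.

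For the inductive step, assume the statement for $n-1$ and split off one application of $\Dqa_s$ from the outside. Apply \eqref{inverse-eqa-Derivative} with the $s$-independent constant $a = q^{-(n-1)}t$:
\begin{align*}
\Dqa_s \eqa^{-1}(q^{-(n-1)}st) = -q^{-(n-1)\alpha}\, t^\alpha\, \eqa^{-1}(q^{-(n-2)}st).
\end{align*}
Because the prefactor is independent of $s$ and $\Dqa_s$ is linear, the remaining $n-1$ applications of $\Dqa_s$ act purely on $\eqa^{-1}(q^{-(n-2)}st)$, which is exactly the expression covered by the inductive hypothesis at index $n-1$. Substituting yields $(-1)^n\, t^{\alpha n}\, q^{-[(n-1)+\binom{n-1}{2}]\alpha}\,\eqa^{-1}(qst)$.

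The only remaining step is arithmetic: verifying the identity $(n-1) + \binom{n-1}{2} = \binom{n}{2}$, which follows from $\frac{2(n-1) + (n-1)(n-2)}{2} = \frac{n(n-1)}{2}$. No genuine obstacle arises beyond careful bookkeeping of the $q$-exponents and tracking how each application of $\Dqa_s$ shifts the internal argument by one factor of $q$, so that after $n$ applications the argument telescopes from $q^{-(n-1)}st$ all the way up to $qst$.
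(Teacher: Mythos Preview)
Your proof is correct and follows essentially the same approach as the paper: both arguments iterate the single-step identity \eqref{inverse-eqa-Derivative} in the variable $s$, so that each application of $\Dqa_s$ contributes a factor $-(q^{-k}t)^\alpha$ and shifts the argument by one power of $q$. The paper simply writes out the resulting telescoping product $(-(q^{-(n-1)}t)^\alpha)\cdots(-(q^{-1}t)^\alpha)(-t^\alpha)$ directly and then collects the exponents, whereas you package the same computation as a formal induction with the binomial identity $(n-1)+\binom{n-1}{2}=\binom{n}{2}$; there is no substantive difference.
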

\begin{proof}
By applying the operator $\Dqa$ with respect to s consequently
$n-1$ times to the function $\eqa^{-1}(q^{-(n-1)}st)$ and using
\eqref{inverse-eqa-Derivative}, we obtain
\begin{align}
(\Dqa_s)^n\eqa^{-1}(q^{-(n-1)}st)
&=(-(q^{-(n-1)}t)^\alpha)\ldots(-(q^{-1}t)^\alpha)
\Dqa_s\eqa^{-1}(st).\nn 
\end{align}
Applying \eqref{inverse-eqa-Derivative} one more time, we
obtain
\begin{align*}
(\Dqa_s)^n\eqa^{-1}(q^{-(n-1)}st)
&=(-(q^{-(n-1)}t)^\alpha)\ldots(-(q^{-1}t)^\alpha)
(-t^\alpha)\eqa^{-1}(qst)\nn\\
&=(-1)^nt^{\alpha n}q^{-\binom{n}{2}\alpha}\eqa^{-1}(qst),
\end{align*}
wherefrom the Lemma's statement follows.
\end{proof}
\begin{proposition}
Suppose that function $\fa1(t)$ has polynomials or formal
power series expansion in $\alpha$-monomials $t^\alpha$. Then
for all integer $n>0$ it holds that
\begin{align*}
\Nqa(t^{\alpha n}\fa1(t))=(-1)^nq^{\binom{n}{2}\alpha}
u^{\alpha n}(\Dqa_s)^n\Rqa(u,q^{-n}s),
\end{align*}
where $\Rqa(u,s)=\Nqa(\fa1(t))$.
\end{proposition}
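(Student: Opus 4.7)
The plan is to reduce the identity to Lemma~\ref{Lemma1} by inserting $q^{-n}s$ into the definition of $\Rqa(u,s)$, pulling the $(\Dqa_s)^n$ operator inside the $t$-integral, and then reading off the prefactor.

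First I would write out
\begin{align*}
\Rqa(u,q^{-n}s)=\Nqa(\fa1(t))\Big|_{s\to q^{-n}s}
=\int_0^\infty \fa1(ut)\,\eqa^{-1}(q\cdot q^{-n}st)\,\dqa t
=\int_0^\infty \fa1(ut)\,\eqa^{-1}(q^{-(n-1)}st)\,\dqa t,
\end{align*}
so that the argument of $\eqa^{-1}$ has exactly the shape required by Lemma~\ref{Lemma1}. Since $\Dqa_s$ is a linear operator acting only on the variable $s$ while the Jackson integration is on $t$, and since $\fa1$ is a polynomial or formal power series in $t^\alpha$ (so the expansion is term-by-term legal), I can commute $(\Dqa_s)^n$ past the integral sign.

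Next I would apply Lemma~\ref{Lemma1} inside the integrand, giving
\begin{align*}
(\Dqa_s)^n\Rqa(u,q^{-n}s)
&=\int_0^\infty \fa1(ut)\,(\Dqa_s)^n\eqa^{-1}(q^{-(n-1)}st)\,\dqa t\\
&=(-1)^n q^{-\binom{n}{2}\alpha}\int_0^\infty t^{\alpha n}\fa1(ut)\,\eqa^{-1}(qst)\,\dqa t.
\end{align*}
Finally I would compare with the definition
$\Nqa(t^{\alpha n}\fa1(t))=\int_0^\infty (ut)^{\alpha n}\fa1(ut)\eqa^{-1}(qst)\,\dqa t = u^{\alpha n}\int_0^\infty t^{\alpha n}\fa1(ut)\eqa^{-1}(qst)\,\dqa t$, so that the remaining integral equals $u^{-\alpha n}\Nqa(t^{\alpha n}\fa1(t))$. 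Rearranging yields exactly
$\Nqa(t^{\alpha n}\fa1(t))=(-1)^n q^{\binom{n}{2}\alpha} u^{\alpha n}(\Dqa_s)^n\Rqa(u,q^{-n}s)$, as desired.

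The only real obstacle is bookkeeping: the three $q$-shifts ($q^{-(n-1)}$ in the Lemma, $q^{-n}$ in the statement, and the $q$ already built into the definition of $\Nqa$) must line up, and the factor $u^{\alpha n}$ coming from the substitution $t\mapsto ut$ inside $t^{\alpha n}\fa1(t)$ must be separated from the $t^{\alpha n}$ that Lemma~\ref{Lemma1} produces. Both are routine once the substitution $q\cdot q^{-n}=q^{-(n-1)}$ is spotted, so I would emphasize this computation rather than any deeper structural argument.
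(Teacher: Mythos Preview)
Your proof is correct and follows essentially the same route as the paper: substitute $s\to q^{-n}s$ in the defining integral so that the kernel becomes $\eqa^{-1}(q^{-(n-1)}st)$, commute $(\Dqa_s)^n$ inside, invoke Lemma~\ref{Lemma1}, and then factor $u^{\alpha n}$ out of $(ut)^{\alpha n}$ to identify $\Nqa(t^{\alpha n}\fa1(t))$. If anything, your write-up is slightly more explicit than the paper's about why the $q$-shifts line up and about the justification for exchanging $(\Dqa_s)^n$ with the $t$-integral.
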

\begin{proof}
We have
\begin{align}
(\Dqa_s)^n\Rqa(u,q^{-n}s)&=(\Dqa_s)^n\int_0^\infty \fa1(ut)
\eqa^{-1}(q^{-(n-1)}st)\dqa t\nn\\
&=\int_0^\infty \fa1(ut)(\Dqa_s)^n
\eqa^{-1}(q^{-(n-1)}st)\dqa t.\nn
\end{align}
By using using Lemma~\ref{Lemma1}, we have
\begin{align}
(\Dqa_s)^n\Rqa(u,q^{-n}s)&=\int_0^\infty \fa1(ut)(-1)^n
t^{\alpha n}q^{-\binom{n}{2}\alpha}\eqa^{-1}(qst)\dqa t\nn\\
&=(-1)^n\frac{q^{-\binom{n}{2}\alpha}}{u^{\alpha n}}
\int_0^\infty (ut)^{\alpha n}\fa1(ut)\eqa^{-1}(q^{-(n-1)}st)
\dqa t,\nn\\
&=(-1)^n\frac{q^{-\binom{n}{2}\alpha}}{u^{\alpha n}}
\Nqa(t^{\alpha n}\fa1(t)).\nn
\end{align}
 The rearrangement of the last equation completes the proof.
\end{proof}

The Natural transform is a function of two variables, namely
$u$ and $s$. The previous Proposition establishes a connection
between the transform of product of $\fa1(t)$ with a positive
power of $\alpha$-monomials $t^\alpha$ and $q,\alpha$-deformed
derivative with respect to one of the variables, namely $s$,
of $q,\alpha$-transform. Let us consider now a derivative of
deformed transform with respect to its another variable $u$.
\begin{proposition}
Suppose that function $\fa1(t)$ has the following expansion:
$$\fa1(t)=\sum_{m=0}^{\infty}a_mt^{\alpha m}.$$ Then
for all integer $n>0$ it holds that
\begin{align*}
\Nqa(t^{\alpha n}\fa1(t))=\frac{u^{\alpha n}}{s^{\alpha n}}
(\Dqa_u)^nu^{\alpha n}\Rqa(u,s).
\end{align*}
\end{proposition}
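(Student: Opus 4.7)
The plan is to reduce everything to monomials and use linearity. Since $f_\alpha(t)=\sum_{m\ge 0}a_m t^{\alpha m}$ and both $N_{q,\alpha}$ and the operator $(D_u^{q,\alpha})^n$ are linear, it suffices to verify the identity when $f_\alpha(t)=t^{\alpha m}$ for each $m\ge 0$, and then sum. This sidesteps any analytic subtlety and turns the proposition into a bookkeeping exercise in the monomial formula $N_{q,\alpha}(t^{\alpha k})=u^{\alpha k}[k\alpha]!/s^{\alpha(k+1)}$ established earlier and in the rule $D^{q,\alpha}_x x^{\alpha k}=[k\alpha]x^{\alpha(k-1)}$ from \eqref{dqa-monomial}.

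First I would compute the left-hand side on a monomial: $N_{q,\alpha}(t^{\alpha n}\cdot t^{\alpha m})=N_{q,\alpha}(t^{\alpha(m+n)})=u^{\alpha(m+n)}[(m+n)\alpha]!/s^{\alpha(m+n+1)}$. For the right-hand side, starting from $R_{q,\alpha}(u,s)=u^{\alpha m}[m\alpha]!/s^{\alpha(m+1)}$, multiply by $u^{\alpha n}$ to get $[m\alpha]!\,u^{\alpha(m+n)}/s^{\alpha(m+1)}$. Then apply $(D^{q,\alpha}_u)^n$: iterating \eqref{dqa-monomial} yields
\begin{align*}
(D^{q,\alpha}_u)^n u^{\alpha(m+n)}=[(m+n)\alpha][(m+n-1)\alpha]\cdots[(m+1)\alpha]\, u^{\alpha m}=\frac{[(m+n)\alpha]!}{[m\alpha]!}\,u^{\alpha m}.
\end{align*}
Collecting the factors and multiplying by $u^{\alpha n}/s^{\alpha n}$ produces exactly $u^{\alpha(m+n)}[(m+n)\alpha]!/s^{\alpha(m+n+1)}$, matching the left-hand side.

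Finally I would use linearity: summing the monomial identity against the coefficients $a_m$ gives on the left $N_{q,\alpha}(t^{\alpha n}f_\alpha(t))$ (since the $N_{q,\alpha}$-transform of a formal power series in $\alpha$-monomials is defined term by term), and on the right $(u^{\alpha n}/s^{\alpha n})(D^{q,\alpha}_u)^n u^{\alpha n}R_{q,\alpha}(u,s)$, since $D^{q,\alpha}_u$ acts term by term on the power series $u^{\alpha n}R_{q,\alpha}(u,s)=\sum_m a_m [m\alpha]!\,u^{\alpha(m+n)}/s^{\alpha(m+1)}$.

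The only non-routine step is the iterated-derivative calculation $(D^{q,\alpha}_u)^n u^{\alpha(m+n)}=[(m+n)\alpha]!/[m\alpha]!\cdot u^{\alpha m}$, which could be written as a short induction on $n$ from \eqref{dqa-monomial} if one wants to be fully explicit. Everything else is substitution and linearity; no new identities about the conformable $q$-derivative or the transform are needed beyond those already proved.
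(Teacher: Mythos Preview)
Your proof is correct and follows essentially the same approach as the paper: both arguments reduce to the monomial transform formula $N_{q,\alpha}(t^{\alpha k})=u^{\alpha k}[k\alpha]!/s^{\alpha(k+1)}$ and the iterated derivative identity $(D^{q,\alpha}_u)^n u^{\alpha(m+n)}=\dfrac{[(m+n)\alpha]!}{[m\alpha]!}\,u^{\alpha m}$, then invoke linearity. The only cosmetic difference is that the paper keeps the full series $\sum_m a_m(\cdots)$ throughout and factors at the end, whereas you verify the identity for a single monomial and then sum; the content is identical.
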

\begin{proof}
If  $\fa1(t)=\sum_{m=0}^{\infty}a_mt^{\alpha m}$, then
\begin{align*}
\Nqa(t^{\alpha n}\fa1(t))&=\Nqa\left(\sum_{m=0}^{\infty}a_m
t^{\alpha (m+n)}\right).
\end{align*}
By using the linearity of the transform and applying
\eqref{transf-tan}, we get
\begin{align}
\Nqa(t^{\alpha n}\fa1(t))&=\sum_{m=0}^{\infty}
\frac{u^{\alpha(n+m)}}{s^{\alpha(n+m+1)}}[(n+m)\alpha]!a_m
\nn\\
&=\frac{u^{\alpha n}}{s^{\alpha n}}\sum_{m=0}^{\infty}
\frac{u^{\alpha m}}{s^{\alpha (m+1)}}[(n+m)\alpha]!a_m\nn\\
&=\frac{u^{\alpha n}}{s^{\alpha n}}\sum_{m=0}^{\infty}
(\Dqa_u)^n\frac{[m\alpha]!a_mu^{\alpha(n+m)}}{s^{\alpha(m+1)}}
\nn\\
&=\frac{u^{\alpha n}}{s^{\alpha n}}\sum_{m=0}^{\infty}
(\Dqa_u)^n\frac{u^{\alpha n}\cdot[m\alpha]!a_mu^{\alpha m}}
{s^{\alpha(m+1)}}\nn\\
&=\frac{u^{\alpha n}}{s^{\alpha n}}
(\Dqa_u)^nu^{\alpha n}\Nqa(\fa1(t))\nn.
\end{align}
The replacement $\Nqa(\fa1(t))$ by $\Rqa(u,s)$ in the last
equation completes the proof.
\end{proof}
These results are in complete agreement with those obtained
for non-deformed Sumudu and Natural transform investigated by
Belgacem and others \cite{Belgacem2006a,Belgacem2012}. Now we
will give another representation of the $q,\alpha$-deformed
natural transform of the product of $\fa1(t)$ with positive
degree of $\alpha$-monomial $t^\alpha$. This Proposition
extends the \cite[Theorem~4.2]{Belgacem2012}.
\begin{proposition}
Suppose that function $\fa1(t)$ has polynomial or formal
power series expansion in $\alpha$-monomials $t^\alpha$. Then
for all integer $n>0$ it holds that
\begin{align*}
\Nqa(t^{\alpha n}\fa1(t))=\frac{u^{\alpha n}}{s^{\alpha n}}
\sum_{k=0}^{n}b_{n,k}u^{\alpha k}(\Dqa_u)^k\Rqa(u,s),
\end{align*}
where the coefficients $b_{n,k}$ satisfy the recurrence
relationship
\begin{align*}
b_{n,k}=\left\{\begin{array}{ll}
[n\alpha]b_{n-1,0} &  k=0, \\
{[(n+k)\alpha]}b_{n-1,k}+q^{\alpha(n-1+k)}b_{n-1,k-1} & 0<k<n, \\
q^{\alpha(2n-1)}b_{n-1,n-1} & k=n.
\end{array}\right.
\end{align*}
with initial condition $b_{0,0}=1$.
\end{proposition}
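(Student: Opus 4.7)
The plan is to induct on $n$. For the base case $n=1$, the recurrence forces $b_{1,0}=[\alpha]$ and $b_{1,1}=q^{\alpha}$, so the claim reduces to
\[
\Nqa(t^\alpha\fa1(t))=\frac{u^\alpha}{s^\alpha}\bigl([\alpha]\Rqa(u,s)+q^\alpha u^\alpha\Dqa_u\Rqa(u,s)\bigr),
\]
which one reads off from the preceding proposition ($\Nqa(t^\alpha\fa1(t))=\frac{u^\alpha}{s^\alpha}\Dqa_u(u^\alpha\Rqa(u,s))$) after a single application of the $q$-Leibniz rule \eqref{qaLeibnizRule} to $\Dqa_u(u^\alpha\Rqa(u,s))$.

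For the inductive step, I would factor $t^{\alpha(n+1)}\fa1(t)=t^\alpha\cdot g(t)$ with $g(t):=t^{\alpha n}\fa1(t)$. Since $g$ is still a formal power series in $t^\alpha$, the preceding proposition applied at exponent $1$ gives
\[
\Nqa(t^{\alpha(n+1)}\fa1(t))=\frac{u^\alpha}{s^\alpha}\Dqa_u\bigl(u^\alpha\Nqa(g(t))\bigr).
\]
Substituting the inductive hypothesis for $\Nqa(g(t))$ and pulling the $u^\alpha$ inside the sum reduces the right-hand side to
\[
\frac{u^\alpha}{s^{\alpha(n+1)}}\sum_{k=0}^{n}b_{n,k}\,\Dqa_u\bigl(u^{\alpha(n+1+k)}(\Dqa_u)^k\Rqa(u,s)\bigr).
\]

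I would then apply \eqref{qaLeibnizRule} termwise with $f(u)=u^{\alpha(n+1+k)}$ and second factor $(\Dqa_u)^k\Rqa(u,s)$: the $f(qu)$ branch contributes $q^{\alpha(n+1+k)}u^{\alpha(n+1+k)}(\Dqa_u)^{k+1}\Rqa$, and the other branch contributes $[(n+1+k)\alpha]u^{\alpha(n+k)}(\Dqa_u)^k\Rqa$. Reindexing the first piece by $j=k+1$ and collecting the coefficient of $u^{\alpha(n+1+j)}(\Dqa_u)^j\Rqa(u,s)$ for each $0\le j\le n+1$ yields the relations
\[
b_{n+1,0}=[(n+1)\alpha]b_{n,0},\qquad b_{n+1,n+1}=q^{\alpha(2n+1)}b_{n,n},
\]
and, for $0<j<n+1$, $b_{n+1,j}=[(n+1+j)\alpha]b_{n,j}+q^{\alpha(n+j)}b_{n,j-1}$, which is exactly the stated recurrence with $n$ replaced by $n+1$.

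The main obstacle is pure bookkeeping rather than a conceptual leap: correctly tracking the factor $q^{\alpha(n+1+k)}$ inserted by the $f(qu)$ piece of the Leibniz rule, and ensuring that the two endpoint coefficients (the $j=0$ term, which receives no contribution from the shifted sum, and the $j=n+1$ term, which receives none from the unshifted sum) deliver precisely the $q$-powers prescribed by the first and third branches of the recurrence.
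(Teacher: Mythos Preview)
Your proof is correct and follows essentially the same induction-on-$n$ argument as the paper: factor $t^{\alpha(n+1)}\fa1=t^\alpha\cdot(t^{\alpha n}\fa1)$, invoke the preceding proposition at exponent $1$, apply the $q$-Leibniz rule \eqref{qaLeibnizRule} termwise, and read off the recurrence after reindexing. Your endpoint exponent $b_{n+1,n+1}=q^{\alpha(2n+1)}b_{n,n}$ is the correct one (it is the stated recurrence with $n\mapsto n+1$); the paper's printed proof actually contains a typo there, writing $q^{\alpha(2n-1)}$.
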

\begin{proof}
We proceed the proof by induction on $n$.  For $n=0$ we have
$\Nqa(\fa1(t))=\Rqa(u,s)$, so that $b_{0,0}=1$. By previous
Proposition, for $n=1$ we have
\begin{align*}
\Nqa(t^\alpha\fa1(t))=\frac{u^\alpha}{s^\alpha}\Dqa_u
u^\alpha\Rqa(u,s),
\end{align*}
which, by applying deformed Leibniz rule
\eqref{qaLeibnizRule}, can be rewritten as
\begin{align*}
\Nqa(t^\alpha\fa1(t))=\frac{u^\alpha}{s^\alpha}
\left([\alpha]\Rqa(u,s)+q^\alpha u^\alpha\Dqa_u\Rqa(u,s)
\right).
\end{align*}
Thus $b_{1,0}=[\alpha]=[\alpha]b_{0,0}$,
$b_{1,1}=q^\alpha=q^\alpha b_{0,0}$, and the claim
holds. Assuming that the claim holds for $m\leq n$, we will
prove it for $m=n+1$.
\begin{align}
\Nqa(t^{\alpha(n+1)}\fa1(t))&=\Nqa(t^\alpha(t^{\alpha n}
\fa1(t)))\nn\\
&=\frac{u^\alpha}{s^\alpha}\Dqa_u u^\alpha
\frac{u^{\alpha n}}{s^{\alpha n}}\sum_{k=0}^{n}b_{n,k}
u^{\alpha k}(\Dqa_u)^k\Rqa(u,s)\nn\\
&=\frac{u^\alpha}{s^\alpha}\Dqa_u \sum_{k=0}^{n}b_{n,k}
\frac{u^{\alpha (1+n+k)}}{s^{\alpha n}}(\Dqa_u)^k\Rqa(u,s)
\nn\\
&=\frac{u^\alpha}{s^\alpha} \sum_{k=0}^{n}b_{n,k}
\frac{[(n+k+1)\alpha]u^{\alpha (n+k)}}{s^{\alpha n}}(\Dqa_u)^k
\Rqa(u,s)\nn\\
&+\frac{u^\alpha}{s^\alpha}\sum_{k=0}^{n}b_{n,k}
\frac{q^{\alpha (1+n+k)}u^{\alpha (1+n+k)}}{s^{\alpha n}}
(\Dqa_u)^{k+1}\Rqa(u,s)
\nn\\
&=\frac{u^{\alpha (n+1)}}{s^{\alpha (n+1)}} \sum_{k=0}^{n}b_{n,k}
[(n+k+1)\alpha]u^{\alpha k}(\Dqa_u)^k
\Rqa(u,s)\nn\\
&+\frac{u^{\alpha (n+1)}}{s^{\alpha (n+1)}}\sum_{k=1}^{n-1}
b_{n,k-1}q^{\alpha (n+k)}u^{\alpha k}
(\Dqa_u)^k\Rqa(u,s)
\nn\\
&=\frac{u^{\alpha (n+1)}}{s^{\alpha (n+1)}} \sum_{k=0}^{n+1}
b_{n+1,k}u^{\alpha k}(\Dqa_u)^k\Rqa(u,s),\nn
\end{align}
where
\begin{align}
\begin{array}{ll}
b_{n+1,0}&=b_{n,0}[(n+1)\alpha],\nn\\
b_{n+1,k}&=b_{n,k}[(n+k+1)\alpha]+b_{n,k-1}q^{\alpha(n+k)},
\quad 1\leq k\leq n,\nn\\
b_{n+1,n+1}&=b_{n,n}q^{\alpha(2n-1)},\nn
\end{array}
\end{align}
which completes the proof.
\end{proof}

We end this paper by the following conclusion. Our new
generalization of the Natural transform proposes also new
generalizations of other widely used integral transforms.
By applying the techniques described here, one can solve a
$k$-order linear $q$-differential equation with constant
coefficients. There is no need to find separately homogeneous
solution and a particular solution. In order to solve a
differential equation by applying the integral transform one
need to now the integral transform of the right-side function
of the differential equation $\sum_{0\leq j\leq k} a_{j}
(\Dqa_x)^{k-j} f(x)=b(x)$ and the initial conditions
$(\Dqa_x)^jy(0)=y_j$ for $j=0,\ldots,k-1$.

\vspace{0.5cm}
\textbf{Acknowledgement}. The research of the first author
was supported  by the Ministry of Science and Technology,
Israel.

\end{document}